\numberwithin{equation}{section}
\newtheorem{thm}{Theorem}[section]
\newtheorem{lem}[thm]{Lemma}
\newtheorem{prop}[thm]{Proposition}
\newtheorem{conj}[thm]{Conjecture}
\theoremstyle{definition}
\newtheorem{dfn}[thm]{Definition}
\newtheorem{ntn}[thm]{Notation}
\theoremstyle{remark}
\newtheorem{rmk}[thm]{Remark}
\newtheorem{exa}[thm]{Example}
\newcommand{\Z}{\mathbb{Z}}
\newcommand{\Q}{\mathbb{Q}}
\newcommand{\C}{\mathbb{C}}
\newcommand{\F}{\mathbb{F}}
\newcommand{\A}{\mathbb{A}}
\newcommand{\R}{\mathbb{R}}
\DeclareMathOperator{\Aut}{Aut}
\newcommand{\V}{\mathbb{V}}
\newcommand{\Vla}{\mathbb{V}_{\lambda}}
\DeclareMathOperator{\PGL}{PGL}
\DeclareMathOperator{\SL}{SL}
\DeclareMathOperator{\Sp}{Sp}
\DeclareMathOperator{\Sym}{Sym}
\DeclareMathOperator{\Tr}{Tr}
\newcommand{\Mm}{\mathcal M}
\newcommand{\Aa}{\mathcal A}
\newcommand{\Mmb}{\overline{\mathcal M}}
\newcommand{\Xx}{\mathcal X}
\newcommand{\s}{\mathbb S}
\DeclareMathOperator{\disc}{disc}
\DeclareMathOperator{\cusp}{cusp}
\newcommand{\pd}{\Pi_{\disc}}
\newcommand{\pc}{\Pi_{\cusp}}
\newcommand{\Zhat}{\hat{\Z}}
\newcommand{\Ll}{\mathbb L}
\DeclareMathOperator{\Gal}{Gal}
\DeclareMathOperator{\ch}{ch}
\DeclareMathOperator{\Log}{Log}
\DeclareMathOperator{\Exp}{Exp}
\DeclareMathOperator{\GSp}{GSp}
\newcommand{\h}{\mathrm h}
\newcommand{\supth}[1]{\ensuremath{#1^{\mathrm{th}}}}
\title{Cohomology of moduli spaces \\ via a result of Chenevier and Lannes}
\author{Jonas Bergstr\"om}
\address{Matematiska Institutionen, Stockholms Universitet, SE-106 91 Stockholm, Sweden}
\email{jonasb@math.su.se}
\author{Carel Faber}
\address{Mathematisch Instituut, Universiteit Utrecht,
P.O. Box 80010, 3508 TA Utrecht, The Netherlands}
\email{C.F.Faber@uu.nl}
\begin{document}



\maketitle

\begin{prelims}

\DisplayAbstractInEnglish

\bigskip

\DisplayKeyWords

\medskip

\DisplayMSCclass







\end{prelims}


\newpage

\setcounter{tocdepth}{1}

\tableofcontents


\begin{section}{Introduction}
The central role in this paper is played by a classification result of Chenevier and Lannes \cite{ChLa}. For $n\ge1$, they classify the level $1$ algebraic cuspidal automorphic representations of $\PGL_n$ of motivic weight $w\le22$. There are $11$ of them, given in Theorem~\ref{thm-CL}.

As a part of the web of conjectures in the Langlands program, there is the so-called Langlands correspondence, which says, loosely speaking, that there should be a bijection between algebraic automorphic representations and irreducible geometric Galois representations. The assumption that such a bijection exists, formulated explicitly for weight at most $22$ in Conjecture~\ref{conj-gal}, has very concrete geometric consequences, as exemplified by Theorem~\ref{thm-X}. Namely, the cohomology groups of Deligne--Mumford stacks that are smooth and proper over $\Z$ of relative dimension at most $22$ would all consist of Galois representations corresponding to the automorphic representations given by Chenevier and Lannes.

We stress the strong implications this has for several moduli spaces of central interest in algebraic geometry. For example, assuming Conjecture~\ref{conj-gal}, we determine the cohomology groups of the moduli spaces $\Mmb_{3,n}$ of stable $n$-pointed curves of genus~$3$ for all $n\leq14$, as Galois representations endowed with an $\s_n$-action (up to semisimplification).  Similarly, we determine the $\s_n$-equivariant Euler characteristic of $\Mm_{3,n}$ for all $n\leq14$ as an element of the Grothendieck group of Galois representations, and we also determine the Euler characteristics of all standard symplectic local systems of weight at most 16 on the moduli space $\Aa_3$ of principally polarized abelian threefolds.  For this, we use the fact that the integer-valued Euler characteristics are known, and we also use computer counts over small finite fields (with at most 25 elements) of all smooth curves of genus~3 with their zeta functions.  The results are available at \url{https://github.com/jonasbergstroem/M3A3interp}.  This adds to a series of recent results about the cohomology of moduli spaces of pointed curves; see for instance \cite{BFP,CaLa,CLP}.
\end{section}

\subsection*{Acknowledgments}
We thank Ga\"etan Chenevier and the referee for their helpful comments.

\begin{section}{A result of Chenevier and Lannes} \label{sec-chenevierlannes}
Let $G$ be a semisimple group scheme over $\Z$. Recall that the
space $\Aa^2(G)$ of square integrable automorphic forms for $G$
may be seen as the space of square integrable functions
on $G(\Q)\backslash G(\A)/G(\Zhat)$
for a natural Radon measure.

Denote by $\Aa_{\disc}(G)$ the discrete part of $\Aa^2(G)$, the closure of the sum of the closed and topologically irreducible $G(\R)$-subrepresentations of $\Aa^2(G)$.  Let $H(G)$ be the Hecke ring of $G$ (\textit{cf.}~\cite[Section~V.2.5]{ChLa}).  One denotes by $\pd(G)$ the set of (isomorphism classes of) closed and topologically irreducible subrepresentations of $\Aa_{\disc}(G)$, for the natural actions of $G(\R)$ and $H(G)$ (\textit{cf.}~\cite[Section~IV.3]{ChLa}).
The elements of $\pd(G)$ are called the \emph{discrete automorphic representations} of~$G$. The set $\pd(G)$ is in general countably infinite.

In addition, one has the subspace $\Aa_{\cusp}(G)\subset \Aa^2(G)$ of cuspidal automorphic forms; it is known to be contained in $\Aa_{\disc}(G)$.  Correspondingly, one has the subset $\pc(G)\subset\pd(G)$ of \emph{cuspidal automorphic representations} of $G$.

Let $\pi=\pi_{\infty}\otimes\pi_f\in\pc(G)$.  Denote by $c_{\infty}(\pi)$ the infinitesimal character of $\pi_{\infty}$ (\textit{cf.}~\cite[Section~VI.3.4]{ChLa}). By the Harish--Chandra isomorphism, it may be seen as a semisimple conjugacy class of the Lie algebra $\hat{\mathfrak g}$ of the Langlands dual group $\hat{G}$ of $G_{\C}$ (\textit{cf.}~\cite[Sections~VI.1.2 and~VI.3.1]{ChLa}).

Now take $G=\PGL_n$. Then $\hat{G}(\C)\cong\SL_n(\C)$.  For $\pi\in\pc(\PGL_n)$, the \emph{weights} of $\pi$ are the eigenvalues of the semisimple conjugacy class $c_{\infty}(\pi)\subset M_n(\C)$. One says that $\pi$ is \emph{algebraic} if the weights of $\pi$ are contained in $\tfrac12\Z$ and if all the differences $w-w'$ between the weights of $\pi$ are integers (\textit{cf.}~\cite[Section~VIII.2.6]{ChLa}).  The \emph{motivic weight} of an algebraic $\pi\in\pc(\PGL_n)$ is by definition twice the largest weight of $\pi$.

The classification result of Chenevier and Lannes \cite[Th\'eor\`eme F]{ChLa} reads as follows.

\begin{thm} \label{thm-CL} Let $n\ge1$, and let $\pi\in\pc(\PGL_n)$ be an algebraic cuspidal automorphic representation of motivic weight at most $22$. Then $\pi$ is one of the following $11$ representations: 
$$1,\hphantom{,} \Delta_{11},\hphantom{,} \Delta_{15},\hphantom{,} \Delta_{17},\hphantom{,} \Delta_{19},\hphantom{,} \Delta_{19,7}, \hphantom{,} \Delta_{21},\hphantom{,} \Delta_{21,5},\hphantom{,} \Delta_{21,9},\hphantom{,} \Delta_{21,13},\hphantom{,} \Sym^2\Delta_{11}.$$
\end{thm}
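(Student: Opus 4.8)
\emph{Proof proposal.} The plan is to follow Chenevier and Lannes: reduce first to the self-dual such $\pi$ (dealing with the non-self-dual ones at the end), then match these up, via Arthur's endoscopic classification, with the level-one discrete automorphic spectra of small classical groups — $\Sp_4$, $\Sp_6$, $\mathrm{SO}_7$, $\mathrm{SO}_8$, $\mathrm{SO}_9$, and a few more — and finally exploit the fact that those spectra are explicitly computable from dimension formulas for spaces of classical, genus-two and genus-three Siegel cusp forms together with the classification of even unimodular lattices of rank at most $24$.

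The small cases I would dispose of by hand. For $n=1$ only the trivial representation $1$ occurs. For $n=2$, algebraic cuspidal level-one representations of $\PGL_2$ are exactly those attached to normalized Hecke eigenforms for $\SL_2(\Z)$, with motivic weight $w$ matching elliptic weight $w+1$; since $\dim S_k(\SL_2(\Z))=1$ for $k\in\{12,16,18,20,22\}$ and vanishes for all other $k\le 23$, this yields precisely $\Delta_{11},\Delta_{15},\Delta_{17},\Delta_{19},\Delta_{21}$. For $n=3$, a self-dual cuspidal representation of $\PGL_3$ is the symmetric square of one on $\GL_2$ (Gelbart--Jacquet and its converse), and $\Sym^2\Delta_{11}$, of motivic weight $22$, is the only one in range, since $\Sym^2\Delta_{15}$ already has motivic weight $30$. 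For $n=4$, the self-dual cuspidal level-one representations are of symplectic type and, by Arthur, are the transfers to $\GL_4$ of the general-type vector-valued genus-two Siegel cusp forms; Tsushima's dimension formula singles out exactly $\Delta_{19,7},\Delta_{21,5},\Delta_{21,9},\Delta_{21,13}$ below motivic weight $22$, while the orthogonal-type possibilities (symmetric cubes $\Sym^3\Delta_w$ and tensor products $\Delta_w\boxtimes\Delta_{w'}$) all have motivic weight $>22$ or fail to be cuspidal.

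The crux is to bound $n$. On the archimedean side, algebraicity together with self-duality forces the infinitesimal character of $\pi_\infty$ — equivalently the $n$ weights of $\pi$, counted with multiplicity — to lie in the finite set $\tfrac12\Z\cap[-11,11]$ and to be stable under negation. On the analytic side, the completed $L$-function $\Lambda(s,\pi)$ is entire of order one, has an Euler product, and satisfies a functional equation whose conductor is $1$ and whose gamma factor is pinned down by the weights; feeding a suitable test function into the associated Weil explicit formula (the positivity method of Mestre, Stark--Odlyzko, Fermigier and Miller) produces an effective inequality that is violated as soon as $n$ exceeds a bound $n_0(w)$ depending only on the motivic weight. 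For $w\le 22$ this caps $n$ at a small value, leaving only finitely many cases.

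For the remaining range $5\le n\le n_0$ I would invoke Arthur's multiplicity formula: a self-dual such $\pi$ is the standard transfer of a level-one discrete automorphic representation of some $\Sp_{2g}$ or $\mathrm{SO}_m$ of small rank, and Chenevier and Lannes enumerate all of these, using dimensions of genus-two and genus-three Siegel cusp forms and, for the orthogonal groups, the classification of even unimodular lattices — in particular the twenty-four Niemeier lattices. Reading off the cuspidal $\PGL_n$-constituents of motivic weight $\le 22$ from these lists produces nothing beyond the eleven representations already found. Finally, a non-self-dual cuspidal $\pi$ on $\PGL_n$ is excluded because $\pi\oplus\pi^\vee$ would then be a self-dual, level-one, algebraic representation of $\GL_{2n}$ of the same motivic weight, whose cuspidal constituents must lie among the eleven — all of which are self-dual, a contradiction. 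The main obstacle is the combination of the last two steps: making the explicit-formula bound genuinely effective, and then controlling the entire level-one discrete spectra of the auxiliary classical groups, which rests on Arthur's (twisted) endoscopic classification together with a substantial body of explicit computation with lattices and Siegel modular forms — precisely the content of \cite{ChLa}.
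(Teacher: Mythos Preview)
The paper does not give its own proof of this statement: Theorem~\ref{thm-CL} is quoted from \cite[Th\'eor\`eme~F]{ChLa}, with only a pointer to the simplified proof in \cite{ChTa}. So there is no in-paper argument to compare against. Your proposal is, in broad strokes, a faithful summary of the Chenevier--Lannes strategy --- explicit-formula bounds on~$n$, Arthur's endoscopic classification to pass to level-one discrete spectra of small classical groups, and the explicit enumeration of those spectra via Siegel modular forms and the Niemeier lattices.

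One genuine wrinkle: your closing argument for non-self-dual $\pi$ is circular as written. The cuspidal constituents of the isobaric sum $\pi\boxplus\pi^\vee$ on $\GL_{2n}$ are $\pi$ and $\pi^\vee$ themselves; saying they ``must lie among the eleven'' is exactly what you are trying to prove, and you cannot conclude it from the self-dual classification on $\GL_{2n}$ since Arthur's parameters for orthogonal/symplectic groups are built from \emph{self-dual} cuspidal pieces, not from conjugate pairs. In \cite{ChLa} the explicit-formula positivity argument is applied to the Rankin--Selberg $L$-function $L(s,\pi\times\pi^\vee)$, which has non-negative Dirichlet coefficients for \emph{any} cuspidal $\pi$; self-duality is not assumed there, so the bound on $n$ holds outright. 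The remaining small-$n$ cases are then handled directly (and for $n\le2$ every $\pi$ on $\PGL_n$ is automatically self-dual). Apart from this, your sketch captures the structure of the proof in \cite{ChLa}.
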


Here, the weights of $\Delta_w$ are $\pm\tfrac{w}2$, and those of $\Delta_{w,v}$ are $\pm\tfrac{w}2,\pm\tfrac{v}2$. More precisely, for $w\in\{11,15,17,19,21\}$, the representation $\Delta_w$ is the element of $\pc(\PGL_2)$ generated by the $1$-dimensional space $S_{w+1}(\SL_2(\Z))$ of cusp forms of weight $w+1$ for $\SL_2(\Z)$; the representations $\Delta_{19,7}$, $\Delta_{21,5}$, $\Delta_{21,9}$, and $\Delta_{21,13}$ in $\pc(\PGL_4)$ correspond, respectively, to the $1$-dimensional spaces of Siegel cusp forms for $\Sp_4(\Z)$ of types $\Sym^6\det^8$, $\Sym^4\det^{10}$, $\Sym^8\det^8$, and $\Sym^{12}\det^6$; and $\Sym^2\Delta_{11}\in\pc(\PGL_3)$ is the symmetric square of $\Delta_{11}$, with weights $-11,0,11$; \textit{cf.}~\cite[Section~IX.1.3]{ChLa}.

Further strong results along these lines, as well as a simplification of the proof of Theorem~\ref{thm-CL}, have been obtained by Chenevier and Ta\"{\i}bi~\cite{ChTa}. See also the earlier work by Chenevier and Renard \cite{ChRe}.
\end{section}

\begin{section}{Galois representations and cohomology} \label{sec-galois}
Fix a prime $\ell$. By a Galois representation, we will mean an $\ell$-adic representation of $\Gal(\overline{\Q}/\Q)$. A Galois representation will be called geometric if it is unramified for \emph{all} primes $p\neq \ell$ and crystalline at $p=\ell$.  The cyclotomic character will be denoted by $\Ll$. It is of dimension $1$, Hodge--Tate weight $1$, and motivic weight $2$. For an irreducible geometric Galois representation $V$ of dimension $N$, we have that $\wedge^N V=\Ll^i$ for some $i$, and we define the motivic weight $w(V)$ to be $2i/N$. For a semisimple $V$, we let $w(V)$ denote the maximum of the weights of its irreducible pieces.  Let $F_p$ denote a Frobenius element at $p$.

For a Hecke eigenform $f$ in $S_k(\SL_2(\Z))$ (or in $S_{j,k}(\Sp_4(\Z))$), let $a_p(f)$ denote the eigenvalue of the Hecke operator $T_p$. By \cite{Deligne} (respectively, \cite{Laumon} and \cite{Weissauer2}),
there is a $2$-dimensional (respectively, $4$-dimensional) semisimple 
geometric Galois representation $V_f$ such that 
$$\Tr\left(F_p,V_f\right)=a_p(f),$$ and with Hodge--Tate weights $0,w(V)=k-1$ (respectively, $0,k-2,j+k-1,w(V)=j+2k-3$).  It is irreducible, except when $j=0$ and $f$ is a Saito--Kurokawa lift.  Also, $\Sym^2 V_f$, for an eigenform $f$ in $S_k(\SL_2(\Z))$, will be geometric and irreducible; see for instance \cite[Lemma 4.7]{Ramakrishnan}.  Hence, to each of the $11$ automorphic representations given in Theorem~\ref{thm-CL}, there is naturally associated a Galois representation $\phi_i$ for $1 \leq i \leq 11$. Note that all these representations are self-dual in the sense that $\phi_i \cong \phi_i^{\vee} \otimes \det \, \phi_i$.

\begin{ntn}
If $f_1,\ldots,f_N$ is any basis of eigenforms of $S_k(\SL_2(\Z))$ (or of $S_{j,k}(\Sp_4(\Z))$), let $S[k]$ (respectively, $S[j,k]$) denote the direct sum of the Galois representations $V_{f_i}$ for $1 \leq i \leq N$. 

This means in particular that 
\begin{align*}
  &\phi_2=S[12],\, \phi_3=S[16],\, \phi_4=S[18],\,\phi_5=S[20],\,  \phi_6=S[6,8],\,\phi_7=S[22], \\
  &\phi_8=S[4,10],\,\phi_9=S[8,8],\,\phi_{10}=S[12,6],\,\phi_{11}=\Sym^2 S[12],
\end{align*}
which aligns with the notation used in \cite{BFvdG2}
(note that $N=1$ in all these cases).
\end{ntn}

As a part of the web of conjectures in the Langlands program, there is the so-called Langlands correspondence, which says, loosely speaking, that there should be a bijection between algebraic automorphic representations and irreducible geometric Galois representations. In our case the Galois representations corresponding to the automorphic representations have been constructed. Moreover, a semisimple Galois representation is determined by the traces of Frobenius for almost all primes. Hence, what we need is something similar to \cite[Conjecture 3.5]{Taylor}, saying that to an irreducible geometric Galois representation there corresponds an algebraic automorphic representation such that the traces of Frobenius equal the Hecke eigenvalues for all primes. On the basis of such a conjecture, we are in turn led to conjecture the following. Note that this conjecture is in no way original.

\begin{conj} \label{conj-gal} Every semisimple geometric Galois representation of motivic weight at most $22$, and with non-negative Hodge--Tate weights, is a direct sum of representations $\phi_j$ for $1\leq j \leq 11$ tensored with $\Ll^k$ for some $k \geq 0$.
\end{conj}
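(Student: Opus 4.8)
Since the final statement is a conjecture, I describe how one would \emph{deduce} it from the results available in the paper.

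\medskip

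\noindent\emph{Proposed approach.} The plan is to combine Theorem~\ref{thm-CL} with the expected reciprocity in the direction ``Galois implies automorphic'' (in the spirit of \cite[Conjecture~3.5]{Taylor}), together with the fact that a semisimple $\ell$-adic Galois representation is determined by the traces of Frobenius at almost all primes. Since a semisimple representation is a direct sum of irreducible ones, it suffices to treat an irreducible geometric Galois representation $V$ of motivic weight at most $22$ whose Hodge--Tate weights are all non-negative, and to prove that $V\cong\phi_j\otimes\Ll^k$ for some $j\in\{1,\dots,11\}$ and some $k\ge 0$.

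First I would normalize $V$. The representation $\det V=\wedge^{\dim V}V$ is $1$-dimensional and geometric; since $\Q$ admits no nontrivial everywhere-unramified extensions, every $1$-dimensional geometric Galois representation is an integral power of $\Ll$, so $\det V=\Ll^i$ with $i\in\Z$, where $i$ is the sum of the Hodge--Tate weights of $V$; hence $i\ge 0$ and $w(V)=2i/\dim V\ge 0$. Let $m\ge 0$ be the least Hodge--Tate weight of $V$ and set $V':=V\otimes\Ll^{-m}$. Then $V'$ is again irreducible and geometric, its least Hodge--Tate weight is $0$, all its Hodge--Tate weights remain non-negative, and $0\le w(V')=w(V)-2m\le 22$. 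It is enough to show $V'\cong\phi_j$ for some $j$, for then $V\cong\phi_j\otimes\Ll^m$ with $m\ge 0$.

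Next I would invoke the reciprocity conjecture. Applied to the irreducible geometric representation $V'$, it produces an algebraic \emph{cuspidal} automorphic representation $\pi\in\pc(\PGL_N)$ with $N=\dim V'$ --- irreducibility on the Galois side corresponding to cuspidality on the automorphic side, and $\det V'$ being a power of $\Ll$ being matched by the fact that the Langlands dual of $\PGL_N$ is $\SL_N(\C)$ --- such that $\Tr(F_p,V')$ equals the $T_p$-eigenvalue of $\pi$ for every prime $p$ and such that the correspondence respects the Hodge-theoretic data; in particular the motivic weight of $\pi$ equals $w(V')\le 22$. By Theorem~\ref{thm-CL}, $\pi$ is one of the eleven representations listed there, so $V'$ has the same traces of Frobenius at almost all primes as the associated $\phi_j$. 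As both are semisimple, $V'\cong\phi_j$, and the argument is complete.

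The crux --- and the reason this remains conjectural --- is the step ``$V'$ is automorphic'': showing that every irreducible geometric $\ell$-adic representation of $\Gal(\overline{\Q}/\Q)$ comes from an algebraic cuspidal automorphic representation with matching Hecke eigenvalues is a sweeping and wide-open instance of the Fontaine--Mazur and Langlands conjectures, far beyond current techniques at this level of generality. The remaining points are routine: the reduction to the irreducible case, the observation that $\det V$ is automatically a power of $\Ll$ (so that one genuinely is in the $\PGL_N$ situation of Theorem~\ref{thm-CL}), the verification that twisting by $\Ll^{-m}$ keeps the motivic weight in $[0,22]$, and the final identification by traces of Frobenius. One should also bear in mind that the expected purity of $V'$ (equivalently, temperedness of $\pi$) --- which ensures that its largest Hodge--Tate weight equals $w(V')$, making the bookkeeping of motivic weights consistent --- belongs to the same circle of conjectures; it does hold automatically for the Galois representations arising in the geometric applications of this paper, since those occur in the cohomology of smooth proper Deligne--Mumford stacks.
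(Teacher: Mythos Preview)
Your proposal is correct and matches the paper's own rationale: the paper does not prove Conjecture~\ref{conj-gal} (it is explicitly stated as a conjecture), but motivates it exactly as you do --- by combining the Chenevier--Lannes classification (Theorem~\ref{thm-CL}) with the ``Galois $\Rightarrow$ automorphic'' direction of the Langlands correspondence (\cite[Conjecture~3.5]{Taylor}) and the fact that semisimple Galois representations are determined by Frobenius traces. You have simply spelled out in more detail the normalization by Tate twist and the identification of $\det V$ as a power of $\Ll$, which the paper leaves implicit.
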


Let $\Xx$ be a separated Deligne--Mumford stack of finite type over $\Z$ of relative dimension $d$ together with an action of the symmetric group $\s_n$. For a field $k$, let $H^i_c(\Xx \otimes \bar k)$ denote the $\supth{i}$ compactly supported $\ell$-adic \'etale cohomology group. It is an $\ell$-adic representation of $\Gal(\bar k/k) \times \s_n$. We denote the category of such by $\Gal^{\s_n}_k$ and the corresponding Grothendieck group by $K_0(\Gal^{\s_n}_k)$. Denote by $\Gal_k$ and $K_0(\Gal_k)$ the versions without the $\s_n$-action. We identify an irreducible representation of $\s_n$ with a partition $\mu$ of $n$ in the usual way, and we denote the corresponding Schur polynomial by $s_{\mu}$. So we have a decomposition
$$H^i_c(\Xx \otimes \bar k) = \bigoplus_{\mu \vdash n} \left(W_{i,\mu} \boxtimes s_{\mu} \right) \in \Gal^{\s_n}_k$$
for certain $W_{i,\mu}$ in $\Gal_k$; we put $H^i_{c,\mu}(\Xx \otimes \bar k):=W_{i,\mu}$.  Then we define
$$e_{c,\mu}(\Xx \otimes \bar k)= \sum_{i=0}^{2d} (-1)^i \left[H^i_{c,\mu}\left(\Xx \otimes \bar k\right)\right] \in K_0(\Gal_k)$$
and 
$$e_c(\Xx \otimes \bar k)= \sum_{\mu \vdash n} e_{c,\mu}(\Xx \otimes \bar k) \, s_{\mu} \in K_0\left(\Gal^{\s_n}_k\right).$$
Note that for any closed substack $\mathcal Z \subset \mathcal X$, defined over $k$ and invariant under $\s_n$, it holds that
\begin{equation} \label{eq-add}
  e_{c,\mu}\left(\Xx \otimes \bar k\right)=e_{c,\mu}\left(\mathcal Z \otimes \bar k\right)+e_{c,\mu}\left(\left(\Xx \setminus \mathcal Z\right) \otimes \bar k\right).  
\end{equation}
The analogous statements hold for the integer-valued Euler characteristics $E_c$ and $E_{c,\mu}$.

When $\mathcal X$ is smooth, the semisimplification of the Galois representation $H^i_c(\Xx \otimes \overline \Q)$ will be of motivic weight between $0$ and $i$.  If $\Xx$ is also proper, then it will be geometric and pure of weight $i$ (meaning that all of its irreducible pieces will have weight $i$). It follows that $e_{c}(\Xx \otimes \overline \Q)$ as an element of $K_0(\Gal^{\s_n}_{\Q})$ determines $H^i_c(\Xx \otimes \overline \Q)$ as an element of $K_0(\Gal^{\s_n}_{\Q})$ for all $i$. Moreover, by Poincar\'e duality, we then have  that
\begin{equation} \label{eq-poincare}
 \Ll^d\, H^i_c\left(\Xx \otimes \overline \Q\right)^{\vee} \cong H^{2d-i}_c\left(\Xx \otimes \overline \Q\right)  
\end{equation}
for all $0 \leq i \leq d$.

For a finite field $k=\F_q$ with $q=p^r$ and $\ell \neq p$, we define the (geometric) Frobenius $F_q \in \Gal(\bar k/k)$ to be the inverse of $x \mapsto x^q$. We choose a Frobenius element (by abuse of notation, we will use the same letter) $F_{q} \in \Gal(\overline \Q/\Q)$, using an element of the Galois group of the $p$-adic completion of $\Q$ that is mapped to the Frobenius element $F_q \in \Gal(\bar k/k)$.  If $\mathcal X$ is smooth and proper, then for all $q$ such that $p \neq \ell$, these Frobenii satisfy
\begin{equation} \label{eq-frobenii}
\Tr \left(F_q,e_{c,\mu}\left(\Xx \otimes \overline{\Q}\right)\right) = \Tr\left(F_q,e_{c,\mu}\left(\Xx \otimes \bar k\right)\right) \end{equation}
(by \cite[Proposition 3.1]{edixhovenbogaart}) and these traces will be elements of $\Z$. Note that the traces of $F_q$ for (almost) all primes $q$ will determine $e_{c,\mu}(\Xx \otimes \overline{\Q})$ as an element of $K_0(\Gal_{\Q})$, by a Chebotarov density argument.

\begin{dfn} 
Let $\mathbf \Phi_i$ denote the submonoid of $K_0(\Gal_{\Q})$ generated by
$\Ll^{k} \,  \phi_j$ for all $k\geq 0$ and $1\leq j \leq 11$ such that $2k+w(\phi_j)= i$. 
\end{dfn} 

\begin{exa} The following $34$ Galois representations generate $\cup_{i=0}^{22} \mathbf \Phi_{i} \,$:
\begin{align*}
    & 1,\,  \Ll,\, \Ll^2,\, \Ll^3,\, \Ll^4,\, \Ll^5,\, \Ll^6,\, \Ll^7,\, \Ll^8,\, \Ll^9,\, \Ll^{10},\, \Ll^{11},\, S[12],\, \Ll\,\,S[12],\, \Ll^2\,\,S[12],\, \Ll^3\,\,S[12], \\
    &\Ll^4\,S[12],\,  \Ll^5\,S[12],\, S[16],\, \Ll\,S[16],\, \Ll^2\,S[16],\, \Ll^3\,S[16],\, S[18],\, \Ll\,S[18],\,  \Ll^2\,S[18], \\
    &S[20],\, \Ll\,S[20],\, S[22],\, S[6,8],\, \Ll\,S[6,8],\, S[4,10],\, S[8,8],\, S[12,6],\, \Sym^2S[12].
\end{align*}\end{exa}

\begin{thm} \label{thm-X}
Assume that Conjecture~\ref{conj-gal} holds. Let $\mathcal X$ be a smooth and proper Deligne--Mumford stack over $\Z$ of relative dimension $d$ at most~$22$ with a given action of the symmetric group $\mathbb S_n$. Then the semisimplification of $H_{c,\mu}^i( \mathcal X \otimes \overline{\Q})$ is an element of\, $\mathbf \Phi_i$ for any $0 \leq i \leq d$ and partition $\mu$ of $n$.
\end{thm}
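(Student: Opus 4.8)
The plan is to show that each $H^i_{c,\mu}(\Xx\otimes\overline\Q)$, after semisimplification, is a geometric Galois representation of motivic weight exactly $i$ with non-negative Hodge--Tate weights, and then invoke Conjecture~\ref{conj-gal} to decompose it into the $\phi_j$ tensored with powers of $\Ll$; the weight constraint will then force membership in $\mathbf\Phi_i$. So the first step is to recall, as already noted in the paragraph preceding the statement, that for $\Xx$ smooth and proper over $\Z$ the representation $H^i_c(\Xx\otimes\overline\Q)=H^i(\Xx\otimes\overline\Q)$ is pure of weight $i$ (by the Weil conjectures / Deligne, together with properness) and is unramified at all $p\neq\ell$ with good reduction---which is every $p$, since $\Xx$ is smooth and proper over all of $\Z$---and crystalline at $\ell$ (by Faltings/Tsuji $p$-adic Hodge theory for smooth proper schemes, extended to Deligne--Mumford stacks). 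Hence $H^i(\Xx\otimes\overline\Q)$ is geometric in the sense defined in Section~\ref{sec-galois}.

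Next I would deal with the $\s_n$-equivariant refinement: the action of $\s_n$ on $\Xx$ commutes with the Galois action on cohomology, so each isotypic piece $W_{i,\mu}=H^i_{c,\mu}(\Xx\otimes\overline\Q)$ is a Galois-stable direct summand of $H^i(\Xx\otimes\overline\Q)$, and therefore inherits all the properties above: it is geometric and pure of weight $i$. The one point needing a word is non-negativity of the Hodge--Tate weights. For a smooth proper scheme $X/\Z_\ell$, the Hodge--Tate decomposition of $H^i(X\otimes\overline\Q_\ell)$ has graded pieces $H^{i-a}(X,\Omega^a)$ sitting in Hodge--Tate weight $a$ for $0\le a\le i$, so all Hodge--Tate weights lie in $[0,i]$, in particular are non-negative; this passes to the stack setting and to the $\s_n$-isotypic summand, which is a direct summand of a Hodge--Tate module of non-negative weights.

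With these ingredients, $W_{i,\mu}$ (or rather its semisimplification, which is what Conjecture~\ref{conj-gal} constrains) is a semisimple geometric Galois representation of motivic weight $i\le d\le 22$ with non-negative Hodge--Tate weights. Conjecture~\ref{conj-gal} then says it is a direct sum of terms $\Ll^{k}\phi_j$ with $k\ge0$. It remains to check that every such term actually occurring has $2k+w(\phi_j)=i$: but purity of weight $i$ means every irreducible constituent has motivic weight $i$, and the motivic weight of $\Ll^k\phi_j$ is $2k+w(\phi_j)$ (the cyclotomic character has motivic weight $2$ and tensoring adds weights, while $\phi_j$ is pure of weight $w(\phi_j)$). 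Hence each constituent lies among the generators of $\mathbf\Phi_i$, so $W_{i,\mu}^{\mathrm{ss}}\in\mathbf\Phi_i$, which is exactly the assertion.

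**The main obstacle** is not really conceptual but one of citing the right generality: one needs the purity theorem and $p$-adic Hodge theory (crystallinity, non-negative Hodge--Tate weights) not for smooth proper \emph{schemes} over $\Z$ but for smooth proper Deligne--Mumford \emph{stacks} over $\Z$. This is handled either by a cohomological-descent argument reducing to a simplicial smooth proper scheme, or by appealing to the fact that such a stack has a finite flat cover by a scheme (or a quotient presentation) to which the classical results apply; the properties in question (purity, being unramified, crystallinity, Hodge--Tate weights in $[0,i]$) are all stable under passing to direct summands of the cohomology of such a cover, and the cohomology of the stack is such a summand. Once that transfer is granted, the argument is the short chain above.
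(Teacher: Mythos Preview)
Your argument is correct and follows exactly the paper's approach: establish that $H^i_{c,\mu}(\Xx\otimes\overline{\Q})$ is geometric and pure of weight $i$ (the paper cites \cite{Taylor} and \cite{edixhovenbogaart} for the stack generality), then apply Conjecture~\ref{conj-gal}. You spell out more explicitly than the paper why the Hodge--Tate weights are non-negative and why purity forces $2k+w(\phi_j)=i$, but these are just the details the paper leaves implicit.
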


\begin{proof}
  For every $i \geq 0$, the cohomology group $H_{c,\mu}^i( \mathcal X \otimes \overline{\Q})$  with $\mathcal X$ a smooth and proper Deligne--Mumford stack over $\Z$ is geometric and pure of weight $i$; see \cite[Section~1, p.~79]{Taylor}  and also \cite{edixhovenbogaart} for the generalization to Deligne--Mumford stacks.  The result for $0 \leq i \leq d$ now follows from Conjecture~\ref{conj-gal}.
\end{proof}

\begin{rmk} By \eqref{eq-poincare} it follows that the $\Ll^k \phi_j$  that occur in $H_{c,\mu}^i( \mathcal X \otimes \overline{\Q})$ for $i>d$ are also in $\mathbf \Phi_i$ with the additional constraint 
$k+w(\phi_j) \leq d$. 
\end{rmk}

\begin{dfn}
Let $\mathbf \Psi_i$ denote the submonoid of $K_0(\Gal_{\Q})$ generated by $\Ll^{k} \,  \phi_j$ for all $k\geq 0$ and $1\leq j \leq 11$ such that $k+w(\phi_j) \leq i$. 
\end{dfn}
\end{section}

\begin{rmk} Fix any $i \geq 0$, and assume that $H_{c}^{2i}(\mathcal X  \otimes \overline{\Q})$, as an element of $K_0(\Gal_{\Q})$, is a direct sum of representations $\Ll^i$. By \cite[Theorem 2.1]{edixhovenbogaart} it then follows that $H_{c}^{2i}(\mathcal X  \otimes \overline{\Q})$ is semisimple. 
\end{rmk}

\begin{section}{Moduli spaces}
\begin{subsection}{Moduli spaces of curves} \label{sec-Mgn} 
For any $g,n$, with $n \geq 3$ if $g=0$ and $n \geq 1$ if $g=1$, let $\Mm_{g,n}$ and $\Mmb_{g,n}$ denote the moduli spaces of smooth, respectively stable, $n$-pointed curves of genus $g$. These are smooth Deligne--Mumford stacks over $\Z$ of dimension $3g-3+n$, and $\Mmb_{g,n}$ is also proper over $\Z$.  They come with a natural action of the symmetric group $\mathbb S_n$, permuting the $n$ marked points.

The complement of $\Mm_{g,n}$ in $\Mmb_{g,n}$, denoted by $\partial \Mm_{g,n}$, can be written as a disjoint union of locally closed subsets
\begin{equation} \label{eq-stratM}
\Mm_\Gamma:=\left( \prod_{v \in \Gamma} \Mm_{g(v),n(v)} \right)\,/{\Aut \, \Gamma},
\end{equation}
one for each non-trivial stable graph $\Gamma$ of genus $g$ and with $n$ markings (see, \textit{e.g.}, \cite[Proposition~XII.10.11]{geometryofcurves}).

Note that 
\begin{equation} \label{eq-dimstrat}
  \dim \Mm_\Gamma=\sum_{v \in \Gamma} \left(3g(v)-3+n(v)\right)<3g-3+n.  
\end{equation}

We will analyze which elements of $K_0(\Gal_{\Q})$ can appear in $e_{c,\mu}(\Mm_{g,n}\otimes \overline{\Q})$ using a formula by Getzler and Kapranov, namely \cite[Theorem 8.13]{GK}. We will follow the notation of \cite{GK}, but with a slight generalization. Put $G=\Gal_k$ for some field $k$.  We will consider a stable $\mathbb S$-module $\mathcal V=\{\mathcal V((g,n)): g,n \geq 0\}$, see \cite[Section 2.1]{GK}, but where the graded pieces $\mathcal V((g,n))_i$ will be finite-dimensional $\ell$-adic $G \times \mathbb S_n$-representations (instead of just $\mathbb S_n$-representations). These pieces will then be direct sums of $G \times \mathbb S_n$-representations of the form $W \boxtimes V$. 
Let $[W]$ denote the element of the Grothendieck group $K_0(G)$
of finite-dimensional $G$-representations
corresponding to a $G$-representation $W$.
For an element $W'=\sum_j a_j [W_j] \in K_0(G)$, with $a_j \in \Z$ and $G$-representations $W_j$ with characters $\chi_{W_j}$, define the character $\chi_{W'}=\sum_j a_j \chi_{W_j}$. Note that this gives a bijection between elements of $K_0(G)$ and their characters. 

Let $\Lambda$ be the ring of symmetric power series; see \cite[Section 7.1]{GK}. We define the character $\ch_n(W \boxtimes V)$ to be $\chi_W \cdot \ch_n(V)$, compare with~\cite[Section 7]{GK}, which is an element of $\Lambda$ with coefficients in $K_0(G)$. With this definition, $\mathbb{C}\h(\mathcal V)$, see \cite[Section 8.2]{GK}, will be an element of $\Lambda((h))$ with coefficients in $K_0(G)$. We then define $\mathbb{M} \mathcal V$ as in \cite[Section 2.17]{GK}.

 Let $p_i$ denote the $\supth{i}$ power sum in $\Lambda$. For a $G$-representation $W$, we then put $p_n \circ (\chi_W \cdot p_k)= \chi_{p_n \circ W} \cdot p_{kn}$, where $\chi_{p_n \circ W}(g)=\chi_{W}(g^n)$. Theorem 8.13 of \cite{GK} then follows, with the same proof, and so
 \begin{equation} \label{eq:getzlerkapranov}
 \mathbb{C}\h(\mathbb{M} \mathcal V)=\Log(\Exp(\Delta) \, \Exp(\mathbb{C}\h(\mathcal V))).
 \end{equation}

Putting $\mathcal V((g,n))_i=H^i_c(\Mm_{g,n} \otimes \overline{\Q})$, which are finite-dimensional $\ell$-adic $G \times \mathbb S_n$-representations, we get a stable $\mathbb S$-module $\mathcal V$. For a partition $\mu \vdash n$, we let $s_{\mu}$ denote the corresponding Schur polynomial.  We now have, see \cite[Section 6.2]{GK},
 \[
\mathbb{C}\h(\mathcal V)=\sum_{2(g-1)+n>0} h^{g-1} \,\sum_{\mu \vdash n} e_{c,\mu}\left(\Mm_{g,n}\otimes \overline{\Q}\right) \,  s_{\mu}
\]
and
\[
\mathbb{C}\h(\mathbb{M} \mathcal V)=\sum_{2(g-1)+n>0} h^{g-1} \,\sum_{\mu \vdash n} e_{c,\mu}\left(\Mmb_{g,n}\otimes \overline{\Q}\right) \,  s_{\mu}.
\]

\begin{prop} \label{prop-Mgn} Assume that Conjecture~\ref{conj-gal} holds.
For any $g, n$ such that $3g-3+n \leq 22$ and partition $\mu \vdash n$,  $e_{c,\mu}(\Mm_{g,n}\otimes \overline{\Q})$ is in $\mathbf \Psi_{3g-3+n}$. 
\end{prop}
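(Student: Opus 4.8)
The plan is to prove Proposition~\ref{prop-Mgn} by descending induction on the dimension $3g-3+n$, using the stratification \eqref{eq-stratM} of $\Mmb_{g,n}$ together with the Getzler--Kapranov formula \eqref{eq:getzlerkapranov}. First, note that for any stable graph $\Gamma$ of genus $g$ with $n$ markings, the vertices $v$ satisfy $3g(v)-3+n(v) \leq \dim \Mm_\Gamma < 3g-3+n$ by \eqref{eq-dimstrat}, so in particular all of them have dimension at most $22$. The base case and inductive hypothesis supply, for each such vertex, that $e_{c,\mu}(\Mm_{g(v),n(v)}\otimes\overline{\Q}) \in \mathbf{\Psi}_{3g(v)-3+n(v)}$.

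Next I would unwind the consequences of the two identities
\[
\mathbb{C}\h(\mathcal V)=\sum_{2(g-1)+n>0} h^{g-1} \sum_{\mu \vdash n} e_{c,\mu}(\Mm_{g,n}\otimes\overline{\Q})\, s_{\mu}, \qquad \mathbb{C}\h(\mathbb{M}\mathcal V)=\sum_{2(g-1)+n>0} h^{g-1} \sum_{\mu \vdash n} e_{c,\mu}(\Mmb_{g,n}\otimes\overline{\Q})\, s_{\mu}.
\]
By Theorem~\ref{thm-X} (which applies because $\Mmb_{g,n}$ is smooth and proper over $\Z$ of dimension $3g-3+n\le 22$), each $e_{c,\mu}(\Mmb_{g,n}\otimes\overline{\Q})$ lies in $\bigoplus_i \mathbf{\Phi}_i \subset \mathbf{\Psi}_{3g-3+n}$, the inclusion holding because $H^i$ contributes weight $i$ with Hodge--Tate weights bounded by $i \le 3g-3+n$, and for $i > d$ the Remark after Theorem~\ref{thm-X} gives $k + w(\phi_j) \le d$. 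So the left-hand side of \eqref{eq:getzlerkapranov} is controlled. The strategy is then to invert \eqref{eq:getzlerkapranov} to solve for $\mathbb{C}\h(\mathcal V)$ in terms of $\mathbb{C}\h(\mathbb{M}\mathcal V)$, or equivalently to use the stratification directly: by additivity \eqref{eq-add} applied to $\partial\Mm_{g,n}=\bigsqcup_\Gamma \Mm_\Gamma$ inside $\Mmb_{g,n}$, we get
\[
e_{c,\mu}(\Mm_{g,n}\otimes\overline{\Q}) = e_{c,\mu}(\Mmb_{g,n}\otimes\overline{\Q}) - \sum_{\Gamma} e_{c,\mu}(\Mm_\Gamma\otimes\overline{\Q}),
\]
and each boundary stratum $\Mm_\Gamma$ is, up to a finite quotient by $\Aut\,\Gamma$, a product $\prod_v \Mm_{g(v),n(v)}$. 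Taking the finite quotient does not change the Euler characteristic in $K_0(\Gal_{\Q})$ (invariants of a finite group), and by the Künneth formula $e_{c,\mu}$ of the product is a sum of external tensor products of the $e_{c,\mu_v}(\Mm_{g(v),n(v)}\otimes\overline{\Q})$, reshuffled according to the $\mathbb{S}_n$-action. Since each factor lies in $\mathbf{\Psi}_{3g(v)-3+n(v)}$ by induction and $\sum_v(3g(v)-3+n(v)) < 3g-3+n$, the product lies in $\mathbf{\Psi}_{<3g-3+n} \subset \mathbf{\Psi}_{3g-3+n}$; here one uses that $\mathbf{\Psi}_i \cdot \mathbf{\Psi}_j \subseteq \mathbf{\Psi}_{i+j}$ and $\mathbf{\Psi}_i \subseteq \mathbf{\Psi}_{i+1}$, which follow immediately from the definition of $\mathbf{\Psi}_i$ as the submonoid generated by $\Ll^k\phi_j$ with $k + w(\phi_j) \le i$ (tensoring $\Ll^k\phi_j$ with $\Ll^{k'}\phi_{j'}$ and decomposing into irreducibles keeps total weight and Hodge--Tate weights additive, and each irreducible constituent of $\phi_j \otimes \phi_{j'}$ is, assuming Conjecture~\ref{conj-gal}, again of the form $\Ll^a\phi_b$). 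Combining, $e_{c,\mu}(\Mm_{g,n}\otimes\overline{\Q})$ is a difference of elements of $\mathbf{\Psi}_{3g-3+n}$ — but $\mathbf{\Psi}_{3g-3+n}$ is only a monoid, not a group, so one must argue the difference still lands in it.

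This last point is where I expect the main obstacle to lie, and it is the reason the statement is phrased with $\mathbf{\Psi}$ (a coarse bound allowing all $\Ll^k\phi_j$ with $k+w(\phi_j)\le i$) rather than a sharper weight-by-weight statement. The resolution is that the subtraction $e_{c,\mu}(\Mmb_{g,n}) - \sum_\Gamma e_{c,\mu}(\Mm_\Gamma)$ is not a formal difference of arbitrary monoid elements: geometrically it equals the genuine Euler characteristic $e_{c,\mu}(\Mm_{g,n}\otimes\overline{\Q})$, which by the remark after Theorem~\ref{thm-X} has each $H^i_{c,\mu}$ pure of weight $i$ (since $\Mm_{g,n}$ is smooth, weights are $\le i$; the complementary strata account for the lower weights), so every irreducible constituent appearing in it is an $\Ll^k\phi_j$ that already occurs in some $H^i_{c,\mu}(\Mmb_{g,n})$ or in some $e_{c,\mu_v}(\Mm_{g(v),n(v)})$, and in either case satisfies $k + w(\phi_j) \le 3g-3+n$. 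Concretely one tracks, constituent by constituent with multiplicity, that every $\Ll^k\phi_j$ surviving the alternating sum is one that appeared with positive coefficient in $e_{c,\mu}(\Mmb_{g,n})$ (hence $k+w(\phi_j)\le d$) — the cancellations only remove terms, they never create new ones with larger weight. Making this bookkeeping precise, i.e. checking that no cancellation forces a term outside $\mathbf{\Psi}_{3g-3+n}$, is the technical heart; everything else (Künneth, the finite-quotient step, closure of $\mathbf{\Psi}$ under products and shifts) is routine given Conjecture~\ref{conj-gal} and the results already established in the excerpt.
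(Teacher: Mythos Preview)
Your approach is the same as the paper's: induct on dimension, control $e_{c,\mu}(\Mmb_{g,n})$ via Theorem~\ref{thm-X} plus Poincar\'e duality, control the boundary via the induction hypothesis and the Getzler--Kapranov formula, and subtract using~\eqref{eq-add}. Two points deserve correction, though.

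First, your long detour on the monoid-versus-group issue is misplaced. Although $\mathbf{\Psi}_d$ is literally defined as a submonoid, throughout the paper ``is in $\mathbf{\Psi}_d$'' is used to mean ``lies in the $\Z$-span of the listed generators'' (see for instance equation~\eqref{eq-aunkn}, where the coefficients $d^\lambda_{k,j}$ are arbitrary integers, or Lemma~\ref{lem-A1phi}). Your attempted rescue --- that ``cancellations only remove terms, they never create new ones'' --- is simply false: Euler characteristics of open strata routinely carry negative coefficients, e.g.\ $e_c(\Aa_1\otimes\overline\Q,\V_a)=-1-S[a+2]$. The paper just applies~\eqref{eq-add} without comment because the subgroup reading is intended; there is no ``technical heart'' here.

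Second, your treatment of products on the boundary is needlessly heavy and not quite justified as stated. You propose to decompose each $\phi_j\otimes\phi_{j'}$ into irreducibles by invoking Conjecture~\ref{conj-gal}; but that conjecture only covers weight at most $22$, so the claim $\mathbf{\Psi}_i\cdot\mathbf{\Psi}_j\subset\mathbf{\Psi}_{i+j}$ is not available in general. The paper sidesteps this entirely with the ``dimension reasons'' remark: by induction each vertex $v$ contributes terms $\Ll^{k_v}\phi_{j_v}$ with $k_v+w(\phi_{j_v})\le 3g(v)-3+n(v)$, and summing over the vertices of a non-trivial stable graph gives a total bounded by $\dim\Mm_\Gamma\le d-1\le 21$. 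Since $w(\phi_j)\ge 11$ for every $j\ge 2$, at most one factor $\phi_{j_v}$ can be non-trivial, so the K\"unneth product is already of the form $\Ll^K\phi_J$ with $K+w(\phi_J)\le d-1$, and nothing needs decomposing.
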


\begin{proof} We use induction on the dimension. The statement clearly holds for the zero-dimensional base case $\Mmb_{0,3}=\Mm_{0,3}$. Take any $g,n$ such that $d:=3g-3+n \leq 22$, and assume that $e_{c,\mu}(\Mm_{\tilde g,\tilde n}\otimes \overline{\Q}) \in \mathbf \Psi_{3\tilde g-3+\tilde n}$ for any $\tilde g, \tilde n$ such that $3 \tilde g-3+\tilde n < d$. Using the Getzler--Kapranov formula, we find that $e_{c,\mu}(\partial \Mm_{g,n}\otimes \overline{\Q})$  is in $\mathbf \Psi_{d-1}$, noting that no non-trivial tensor product of representations $\phi_{i}$ with $2 \leq i \leq 11$ can appear, for dimension reasons. 

Applying Theorem~\ref{thm-X} to $\Mmb_{g,n}$, we see that if $\Ll^k \phi_j$ appears in $H^i_c(\Mmb_{g,n} \otimes \overline{\Q})$ for $i \leq d$ with $w=w(\phi_j)$, then its Poincar\'e dual \eqref{eq-poincare} will have weight $2d-i=w+2(d-w-k)$ and be of the form $\Ll^{d-w-k} \phi_j$. Hence $(d-w-k)+w=d-k \leq d$, and so $e_{c,\mu}(\Mmb_{g,n} \otimes \overline{\Q})$ is in $\mathbf \Psi_{d}$. We conclude by applying \eqref{eq-add}.
\end{proof}
\end{subsection}

\begin{subsection}{Moduli spaces of abelian varieties}\label{sec-Ag}
Let $\Aa_g$ denote the moduli space of principally polarized abelian varieties of dimension $g$.  Let $\pi\colon \Xi_g \to \Aa_g$ denote the universal abelian variety, and define the standard local system by $\V:=R^1\pi_* \Q_{\ell}$.  For a highest weight $\lambda=(\lambda_1, \lambda_2,\ldots,\lambda_g)$ with $\lambda_i \geq \lambda_{i+1}$ for $1 \leq i \leq g-1$ and $\lambda_g\geq 0$, we get an irreducible representation of the symplectic group $\GSp_{2g}$ and a corresponding local system $\Vla$, in such a way that $\V$ corresponds to the dual of the standard representation. Put $|\lambda|:=\lambda_1+\lambda_2+\ldots+\lambda_g$.

\begin{thm}[\protect{Faltings--Chai, \cite[Theorem~VI.5.5]{FaCh}, \textit{cf.}~\cite[Theorem~17]{GetTRR}}] \label{thm-FaCh}
The Hodge--Tate weights of $H^i_c(\Aa_g \otimes \overline \Q,\Vla)$ belong to the set
\begin{equation} \label{eq-weights}
W_{\lambda}:=\left\{\sum_{s \in S} s: S \subset {\{ \lambda_g+1,\lambda_{g-1}+2,\ldots,\lambda_1+g  \} } \right \}, 
\end{equation}
consisting of at most $2^g$ distinct elements.  
\end{thm}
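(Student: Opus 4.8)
\begin{pfs}
The plan is to reinterpret the Hodge--Tate weights as jumps of a Hodge filtration on de Rham cohomology, and then to pin those jumps down via Faltings' dual BGG resolution on a toroidal compactification of $\Aa_g$.

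First I would fix a smooth projective toroidal compactification $j\colon\Aa_g\hookrightarrow\widetilde{\Aa}_g$ with normal crossings boundary divisor $D$, all defined over $\Z$ (after inverting a harmless integer if necessary), and recall that $H^i_c(\Aa_g\otimes\overline\Q,\Vla)$ is the $\ell$-adic realization of a mixed motive over $\Q$. By the logarithmic $p$-adic comparison theorems (Faltings, Tsuji), the Hodge--Tate weights of this $\ell$-adic representation (restricted to a decomposition group at a prime $p\ne\ell$) are exactly the integers $r$ with $\mathrm{gr}^r_F H^i\bigl(\widetilde{\Aa}_g\otimes\C,\ \Omega^\bullet_{\log}\otimes\overline{\mathcal V}_\lambda(-D)\bigr)\ne 0$, where $\overline{\mathcal V}_\lambda$ is the canonical extension of the filtered vector bundle with connection underlying $\Vla$, the twist by $-D$ implementing compact supports, and $F$ the Hodge filtration. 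So it is enough to bound these Hodge jumps.

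Next I would invoke the dual BGG complex of Faltings--Chai: the log-de Rham complex of $\overline{\mathcal V}_\lambda(-D)$ is filtered quasi-isomorphic to a complex whose terms are automorphic vector bundles $\mathcal W_{w\cdot\lambda}$ attached to irreducible representations of the Siegel Levi $\GL_g$, indexed by the Kostant representatives $w\in W^P$ for $W(\GSp_{2g})/W(\GL_g)$, with each $\mathcal W_{w\cdot\lambda}$ sitting \emph{purely} in a single step $p(w)$ of $F$ and with the Hodge--de Rham spectral sequence degenerating at $E_1$. Hence every Hodge--Tate weight of $H^i_c(\Aa_g\otimes\overline\Q,\Vla)$ lies in $\{p(w):w\in W^P\}$, and it remains to compute this set. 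For $\GSp_{2g}$ one has $\rho=(g,g-1,\dots,1)$, so $\lambda+\rho=(\lambda_1+g,\lambda_2+g-1,\dots,\lambda_g+1)$, whose set of entries is $\{\lambda_g+1,\lambda_{g-1}+2,\dots,\lambda_1+g\}$; the elements of $W^P$ are parametrized by subsets $S\subseteq\{1,\dots,g\}$, the representative $w_S$ changing the sign of the coordinate $e_i$ for $i\in S$ (so $\#W^P=2^g$); and a short computation of the Hodge-cocharacter pairing gives $p(w_S)=\sum_{i\in S}(\lambda+\rho)_i=\sum_{i\in S}(\lambda_i+g+1-i)$, in the normalization with $p(w_\varnothing)=0$. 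This exhibits the Hodge--Tate weights of $H^i_c(\Aa_g\otimes\overline\Q,\Vla)$ as a subset of $W_\lambda$, which therefore has at most $2^g$ elements. (As a check, for $g=2$ and $\lambda=(j+k-3,k-3)$ this returns $0,\,k-2,\,j+k-1,\,j+2k-3$, matching the known Hodge--Tate weights of Siegel modular forms.)

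The step that needs genuine care is the boundary behaviour: implementing compact supports through the subcanonical extension and checking that both the dual BGG quasi-isomorphism and the $E_1$-degeneration survive in the logarithmic, non-proper situation so that the $p$-adic comparison still applies --- this is precisely what is done in Chapter~VI of Faltings--Chai. If one wishes to sidestep the compact-support bookkeeping, an alternative is to run the (cleaner) BGG argument for ordinary cohomology $H^i(\Aa_g\otimes\overline\Q,\Vla)$, which uses the canonical extension $\overline{\mathcal V}_\lambda$, and then transport the bound to $H^i_c$ by Poincar\'e--Verdier duality, using the self-duality $\Vla^\vee\cong\Vla\otimes\Ll^{-|\lambda|}$ and the symmetry of $W_\lambda$ under $S\mapsto\{1,\dots,g\}\setminus S$, i.e.\ under $r\mapsto\bigl(|\lambda|+\tfrac{g(g+1)}{2}\bigr)-r$.
\end{pfs}
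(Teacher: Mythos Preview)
The paper does not give a proof of this theorem: it is simply quoted from Faltings--Chai \cite[Theorem~VI.5.5]{FaCh} (with a pointer to Getzler's reformulation \cite[Theorem~17]{GetTRR}), so there is nothing to compare your argument against on the paper's side. Your sketch is in fact a faithful outline of the Faltings--Chai argument being cited --- toroidal compactification, the dual BGG complex with its one-step Hodge filtration on each piece, and the identification of the jumps with the Kostant set $\{\,p(w_S)=\sum_{i\in S}(\lambda+\rho)_i : S\subseteq\{1,\dots,g\}\,\}$ --- and your sanity check in genus~$2$ is correct.

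One slip to fix: Hodge--Tate weights are read off at the prime $\ell$, not at a prime $p\neq\ell$. The comparison isomorphism you want is the crystalline/de Rham comparison for the restriction of the $\ell$-adic representation to a decomposition group at $\ell$ (equivalently, take $p=\ell$ in the $p$-adic comparison theorems you invoke). At primes $p\neq\ell$ the representation is unramified and there is no $p$-adic Hodge theory in play. With that correction, and with the care you already flag about the subcanonical extension and $E_1$-degeneration in the logarithmic setting, your sketch is a correct summary of the cited proof.
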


\begin{rmk} \label{rmk-wtAg} 
Also note  that the cohomology groups $H^i_c(\Aa_g \otimes \overline \Q,\Vla)$ will have motivic weight between $0$ and $|\lambda|+i$; see \cite[Corollaire 3.3.4]{DeligneWeilII}.
\end{rmk}

\begin{dfn}
Let $\mathbf \Psi_{\lambda}$ denote the submonoid of $K_0(\Gal_{\Q})$ generated by $\Ll^{k} \,  \phi_j$ for all $k\geq 0$ and $1\leq j \leq 11$ such that 
$2k+w(\phi_j)\leq g(g+1)+|\lambda|$ and such that
$k+w_j$ is in $W_{\lambda}$, for any Hodge--Tate weight $w_j$ of $\phi_j$.
\end{dfn}

\begin{exa} The following Galois representations generate $\Psi_{(10,0,0)}\,$:
$$1,\,\Ll,\,\Ll^2,\,\Ll^3,\,\Ll^{2} \, S[12],\,\Ll^3 \, S[12],\,S[16],\, \Ll\, S[16] .$$
The following Galois representations generate $\Psi_{(10,4,2)}\,$:
$$1,\,\Ll^3,\,\Ll^6,\,\Ll^9,\,\Ll^{13}\,,S[20],\,\Ll^3\, S[20],\,S[6,8],\,\Ll^3\,S[6,8].$$
\end{exa}
\end{subsection}

\begin{subsection}{The Torelli map} \label{sec-torelli}
The moduli spaces we are considering are related via the Torelli map $t_g\colon \Mm_{g}\to \Aa_g$ for $g \geq 2$ and $t_1\colon \Mm_{1,1} \to \Aa_1$. This morphism between stacks has degree $1$ if $g \leq 2$ (it is an isomorphism in genus~$1$) and degree $2$ if $g \geq 3$, ramified along the hyperelliptic locus $\mathcal{H}_g \subset \Mm_g$. If $|\lambda|$ is even, then
\begin{equation} \label{eq-equal}
e_{c}\left( t_g\left(\Mm_{g} \otimes \overline \Q\right),\Vla\right)= e_{c}\left( \Mm_{g}\otimes \overline \Q,t_g^*\Vla\right) \in  K_0\left(\Gal_k\right), 
\end{equation} 
where these Euler characteristics (and $E_c$) are defined as in Section~\ref{sec-galois}.  If $|\lambda|$ is odd, then
$$e_{c}\left( t_g\left(\Mm_{g}\otimes \overline \Q\right),\Vla\right)=0$$ 
due to the presence of the automorphism $-1$ on all Jacobians. But for $g \geq 3$, there is no reason for $e_{c}( \Mm_{g}\otimes \overline \Q,t_g^*\Vla)$ to be zero. The first non-zero example is
$$e_{c}\left(\Mm_{3}\otimes \overline \Q,t_3^*\V_{2,1,0}\right)=\Ll^6-\Ll^2;$$
see \cite{JBquart}.

\begin{rmk} In fact, $e_c(\Mm_{g}\otimes \overline \Q,t_g^*\Vla)$ will contain Galois representations that are not present in 
any $e_{c}(\Aa_{h}\otimes \overline \Q,{\mathbb{V}_{\lambda'}})$
with $h=g$ and $|\lambda'|\leq|\lambda|$, or $h<g$.
These could be called \emph{Teichm\"uller motives}.
Conjecturally, the first two such instances occur
for $g=3$ and $\lambda=(11,3,3)$ or $(7,7,3)$ 
and correspond to two of the seven  automorphic representations of weight~$23$ in \cite[Theorem 3]{ChTa}, as described in \cite{CFvdG3}.
\end{rmk} 

\begin{lem} \label{lem-transl}
For each partition $\mu$ of $n$, there are elements $a_{\mu,\lambda} \in \Z[\Ll]$ such that
$$e_{c,\mu}\left( \Mm_{g,n}\right)=\sum_{|\lambda| \leq n} a_{\mu,\lambda} \, 
e_c\left( \Mm_{g},t_g^*\Vla\right).$$
Conversely, for each $\lambda$ with $|\lambda|=n$, there are elements $b_{\lambda,\mu} \in \Z[\Ll]$ such that 
$$e_c\left( \Mm_{g},t_g^*\Vla\right)=\sum_{|\mu| \leq n} b_{\lambda,\mu} \, e_{c,\mu}\left( \Mm_{g,n}\right).$$
\end{lem}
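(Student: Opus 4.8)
The plan is to exhibit the two claimed change-of-basis relations as a consequence of the branching/induction relationship between the fiber $H^\bullet$ of the forgetful map $\Mm_{g,n} \to \Mm_g$ (or rather, the relative cohomology) and the symplectic local systems $\Vla$ pulled back via $t_g$. First I would recall the Leray spectral sequence for the universal curve $\pi_n\colon \Mm_{g,n} \to \Mm_g$ (the map that forgets all $n$ marked points), which degenerates rationally, so that $e_{c,\mu}(\Mm_{g,n})$ is computed from $e_c(\Mm_g, R^j(\pi_n)_!\Q_\ell)$. The local system $R^\bullet(\pi_n)_! \Q_\ell$ decomposes, as a representation of $\mathrm{Sp}_{2g}$, into pieces governed by the cohomology of the $n$-fold fiber product of the universal curve; on $\Mm_g$ the relevant thing is that $R^1$ of the universal curve is (a twist of) the standard local system $\V$, while higher self-products contribute the various $\Vla$ with $|\lambda| \le n$ together with powers of $\Ll$.

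The cleanest way to organize this is via the observation that $e_{c,\mu}(\Mm_{g,n})$ only depends on $e_c(\Mm_g, t_g^*\Vla)$ for $|\lambda|\le n$, and that the transition is $\Z[\Ll]$-linear: concretely, decomposing the $\s_n$-representation-valued Euler characteristic of the fiber $(\mbox{curve})^{\times n}$ of $\pi_n$ into irreducible $\s_n\times\mathrm{Sp}_{2g}$-pieces, one gets, for each $\mu\vdash n$, an identity of the form $e_{c,\mu}(\Mm_{g,n}) = \sum_{|\lambda|\le n} a_{\mu,\lambda}\, e_c(\Mm_g, t_g^*\Vla)$ with the $a_{\mu,\lambda}$ universal polynomials in $\Ll$ (independent of $g$), coming purely from the combinatorics of how $H^\bullet$ of a product of curves decomposes — this is exactly the kind of bookkeeping worked out by Getzler and by Faber--van der Geer in the genus $2$ and $3$ cases. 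The matrix $(a_{\mu,\lambda})$ is, after specializing the $\mathrm{Sp}_{2g}$-representation theory, block-triangular with respect to $|\lambda|$ and $|\mu|$ in a way that makes it invertible over $\Z[\Ll]$ (the diagonal blocks are invertible over $\Z$), which yields the converse statement with $b_{\lambda,\mu}\in\Z[\Ll]$. For the second relation note that, since $|\lambda|=n$, only $\mu$ with $|\mu|\le n$ can occur.

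I expect the main obstacle to be pinning down precisely the dictionary between (i) the $\s_n$-isotypic decomposition of $e_{c,\mu}(\Mm_{g,n})$ and (ii) the symplectic-isotypic decomposition of the cohomology of the fibered power of the universal curve, including all the Tate twists — i.e. identifying the universal polynomials $a_{\mu,\lambda}$ and checking that the resulting transition matrix is invertible over $\Z[\Ll]$ rather than merely over $\Q(\Ll)$. One subtlety is that $H^1$ of a curve of genus $g$ is $2g$-dimensional, so the naive $\mathrm{GL}$-plethysm must be corrected using the branching rule from $\mathrm{GL}_{2g}$ to $\mathrm{Sp}_{2g}$, which introduces contributions of $\Vla$ with $|\lambda|$ of the same parity as and strictly less than the ``expected'' weight, tensored with powers of $\Ll$; keeping track of these lower-order corrections is what makes the argument bookkeeping-heavy but not deep. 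Once the genus-independence of the combinatorial coefficients is established, invertibility over $\Z[\Ll]$ follows from the fact that the ``leading'' coefficient $a_{\mu,\lambda}$ for $|\lambda|=|\mu|=n$ is the identity matrix of Kostka-type numbers coming from $\Sym$ and $\wedge$ of the standard representation, which is unitriangular up to a permutation and hence invertible over $\Z$.
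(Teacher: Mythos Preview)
Your proposal is correct and follows essentially the same approach as the paper's proof sketch: both use the Leray spectral sequence for the forgetful map $\Mm_{g,n}\to\Mm_g$ to reduce the statement to a representation-theoretic bookkeeping question relating $\s_n$-isotypes to $\GSp_{2g}$-local systems. The paper packages the combinatorics slightly more cleanly by phrasing it as the change of basis between ordinary Schur polynomials $s_\mu$ and symplectic Schur polynomials $s_{<\lambda>}$ (homogenized with an extra variable $q$); since both families are $\Z[q]$-bases of the same module, the existence of $a_{\mu,\lambda},b_{\lambda,\mu}\in\Z[\Ll]$ is immediate once one replaces $q$ by $\Ll$, so your unitriangularity argument is not needed. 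Two minor remarks: the fiber of $\pi_n$ is the configuration space of $n$ distinct points on the curve rather than $C^{\times n}$ (the two are of course related by inclusion--exclusion, which is absorbed into the symmetric-function formalism), and degeneration of the Leray spectral sequence is irrelevant since you only compare Euler characteristics.
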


\begin{proof}[Proof sketch.] This is well known.
The Leray spectral sequence associated to the forgetful morphism $\pi\colon \Mm_{g,n} \to \Mm_{g}$ gives a relation between the cohomology of $\Mm_{g,n}$ and the cohomology of the higher direct images $R^i \pi_* \Q_{\ell}$. The latter sheaves can be expressed in terms of local systems $t_g^*\Vla$ for $\lambda$ with $|\lambda| \leq n$. Taking Euler characteristics, this connection becomes a representation-theoretic question of relating $\s_n$-representations to $\GSp_{2g}$-representations. This can be formulated in symmetric polynomials as finding an expression of the (usual) Schur polynomial $s_{\mu}$ in terms of symplectic Schur polynomials $s_{<\lambda>}$ (see the definition in \cite[Appendix A.45]{FuHa}, which should be homogenized using an extra variable $q$). This can always be done, and there are elements $a'_{\mu,\lambda} \in \Z[q]$ such that
$$s_{\mu}=\sum_{|\lambda| \leq n} a'_{\mu,\lambda} \, s_{<\lambda>},$$
and replacing $q$ with $\Ll$ in $a'_{\mu,\lambda}$ gives $a_{\mu,\lambda}$. 
There are also $b'_{\lambda,\mu} \in \Z[q]$ such that
$$s_{<\lambda>}=\sum_{|\mu| \leq n} b'_{\lambda,\mu} \,s_{\mu} ,$$
and replacing $q$ with $\Ll$ in $b'_{\lambda,\mu}$, we get $b_{\lambda,\mu}$. 
\end{proof}
\end{subsection}

\begin{subsection}{The cases $\boldsymbol{\Aa_1}$ and $\boldsymbol{\Aa_2}$}
  From \cite{Deligne} we know that for any even $a>0$, $e_c(\Aa_1 \otimes \overline \Q ,\V_{a})=-1-S[a+2],$ and this formula also works for $a=0$ if we put $S[2]:=-\Ll-1$. If $a \leq 21$, then $\dim S_{a+2}(\SL_2(\Z))=0$, except when $a+2$ equals $12$, $16$, $18$, $20$, or $22$ (in which cases the dimension equals $1$). We conclude the following.
  
\begin{lem} \label{lem-A1phi} We have that $e_c(\Aa_1 \otimes \overline \Q ,\V_{a})$ is in $\mathbf \Psi_{(a)}$ for $a+1 \leq 22$.
\end{lem}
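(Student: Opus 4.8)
The plan is to start from the explicit formula for $e_c(\Aa_1 \otimes \overline{\Q}, \V_a)$ recalled just above this lemma and to check, term by term, that every Galois representation occurring in it is one of the generators of $\mathbf{\Psi}_{(a)}$. Recall that for even $a > 0$ one has $e_c(\Aa_1 \otimes \overline{\Q}, \V_a) = -1 - S[a+2]$, that the same holds for $a = 0$ with the convention $S[2] = -\Ll - 1$, and that for odd $a$ one has $e_c(\Aa_1 \otimes \overline{\Q}, \V_a) = 0$, since the automorphism $-1$ present on every elliptic curve acts by $(-1)^a = -1$ on $\V_a$ and hence kills all cohomology.

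First I would dispose of the trivial cases: for odd $a$ the Euler characteristic is $0$, which lies in $\mathbf{\Psi}_{(a)}$. For even $a$ with $a+1 \leq 22$ I would split according to the value of $a+2$. If $a + 2 \notin \{12,16,18,20,22\}$, then $\dim S_{a+2}(\SL_2(\Z)) = 0$, so $S[a+2] = 0$; hence $e_c(\Aa_1 \otimes \overline{\Q}, \V_a) = -1$ if $a > 0$, while for $a = 0$ it equals $-1 - (-\Ll - 1) = \Ll$. If $a + 2 \in \{12,16,18,20,22\}$, then $S[a+2]$ is respectively $\phi_2, \phi_3, \phi_4, \phi_5, \phi_7$, so $e_c(\Aa_1 \otimes \overline{\Q}, \V_a) = -1 - \phi_j$. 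Thus the only Galois representations occurring are $1$, the representation $\Ll$ (and this only for $a = 0$), and, when $a + 2$ is one of the five listed weights, $\phi_j = S[a+2]$.

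It then remains to check that each of these is a generator of $\mathbf{\Psi}_{(a)}$, that is, to verify the two defining conditions. For $g = 1$ and $\lambda = (a)$, Theorem~\ref{thm-FaCh} gives $W_{(a)} = \{0, a+1\}$, and the conditions on $\Ll^k \phi_j$ read $2k + w(\phi_j) \leq 2 + a$ and $k + w_j \in \{0, a+1\}$ for every Hodge--Tate weight $w_j$ of $\phi_j$. For $1 = \Ll^0 \phi_1$ these read $0 \leq a+2$ and $0 \in W_{(a)}$, both clear. For $\Ll = \Ll^1 \phi_1$ (needed only when $a = 0$) they read $2 \leq 2$ and $1 \in W_{(0)} = \{0,1\}$, again clear. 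For $\phi_j = S[a+2]$, the motivic weight is $(a+2) - 1 = a+1 \leq a+2$, and the Hodge--Tate weights are $0$ and $a+1$, both in $W_{(a)}$. Hence $e_c(\Aa_1 \otimes \overline{\Q}, \V_a) \in \mathbf{\Psi}_{(a)}$.

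I do not expect a genuine obstacle here; the argument is pure bookkeeping. The only points that need care are the two edge cases — the appearance of $\Ll$ rather than just $-1$ when $a = 0$, and the fact that the hypothesis $a+1 \leq 22$ is precisely what guarantees that the only cusp forms involved have weight $a+2 \leq 22$, so that $S[a+2]$, when nonzero, is one of the representations of Theorem~\ref{thm-CL} — together with keeping the conventions straight, in particular that $S[a+2]$ has motivic weight $a+1$ and Hodge--Tate weights $\{0, a+1\}$, so that both conditions defining the generators of $\mathbf{\Psi}_{(a)}$ are met (with the weight inequality in fact slack by $1$).
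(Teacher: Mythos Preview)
Your proposal is correct and follows exactly the approach the paper takes: the paper places the explicit formula $e_c(\Aa_1\otimes\overline{\Q},\V_a)=-1-S[a+2]$ and the vanishing/dimension facts for $S_{a+2}(\SL_2(\Z))$ with $a+2\le 22$ immediately before the lemma and simply writes ``We conclude the following,'' leaving the term-by-term check of membership in $\mathbf{\Psi}_{(a)}$ to the reader. You have supplied precisely that check, including the edge case $a=0$ and the verification of the Hodge--Tate weight condition against $W_{(a)}=\{0,a+1\}$.
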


By \cite[Theorem 2.1]{Petersen}, see also \cite{FvdG}, we have a similar formula for $e_c(\Aa_2,\V_{a,b})$ with $S[a-b,b+3]$ as ``main term''.

\begin{lem} \label{lem-A2phi} We have that $e_c(\Aa_2 \otimes \overline \Q ,\V_{a,b})$ is in $\mathbf \Psi_{(a,b)}$ for $a+b +3\leq 22$. 
\end{lem}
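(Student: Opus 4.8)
The proof should run exactly parallel to that of Lemma~\ref{lem-A1phi}, using an explicit and \emph{unconditional} input in place of Conjecture~\ref{conj-gal}: namely, Petersen's formula \cite[Theorem~2.1]{Petersen} (see also \cite{FvdG}), which writes $e_c(\Aa_2 \otimes \overline{\Q}, \V_{a,b})$, for $a \ge b \ge 0$ with $a+b$ even, as an explicit $\Z[\Ll]$-linear combination of: the ``main term'' $S[a-b,b+3]$; Tate twists $\Ll^k S[c]$ of elliptic modular Galois representations, for a few explicit even integers $c$ (essentially $c\in\{a+b+4,\,a-b+2,\,b+2\}$); and powers $\Ll^k$ of the cyclotomic character. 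The plan is to record this decomposition and then verify term by term that each summand already lies in $\mathbf \Psi_{(a,b)}$. Here $g(g+1)=6$ and $W_{(a,b)}=\{0,\,b+1,\,a+2,\,a+b+3\}$.

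For the main term, the motivic weight of $S[a-b,b+3]$ is $(a-b)+2(b+3)-3=a+b+3\le22$. When $a>b$ the relevant Siegel cusp forms are vector-valued, hence at full level of general type, so by Theorem~\ref{thm-CL} $S[a-b,b+3]$ is either zero or a single one of $\phi_6,\phi_8,\phi_9,\phi_{10}$; its Hodge--Tate weights are $0,b+1,a+2,a+b+3$, which is precisely $W_{(a,b)}$, so both conditions defining $\mathbf\Psi_{(a,b)}$ hold with $k=0$. When $a=b$ one has $b+3\le12$ (since $a+b+3$ is odd and $\le21$), so every scalar Siegel cusp form of weight $b+3$ for $\Sp_4(\Z)$ is a Saito--Kurokawa lift, whence $S[0,b+3]$ decomposes as $S[2b+4]+\Ll^{b+1}+\Ll^{b+2}$ (up to the sign/normalization conventions of \cite{Petersen}); one checks each of these three pieces directly against $W_{(b,b)}=\{0,b+1,b+2,2b+3\}$ and against the bound $2k+w\le 2b+6$.

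For the remaining (elliptic-modular and Tate) terms, I would exploit that the two membership conditions for $\mathbf\Psi_{(a,b)}$ are automatically satisfied by \emph{any} constituent of $e_c(\Aa_2\otimes\overline\Q,\V_{a,b})$: if $\Ll^k\phi_j$ occurs with nonzero multiplicity in $e_c$, it occurs in some $H^i_c(\Aa_2\otimes\overline\Q,\V_{a,b})$, so its motivic weight $2k+w(\phi_j)$ is at most $|\lambda|+i\le a+b+6$ by Remark~\ref{rmk-wtAg} (as $\dim\Aa_2=3$), and its Hodge--Tate weights $k+w_j$ all lie in $W_{(a,b)}$ by Theorem~\ref{thm-FaCh}. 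Hence the only point requiring genuine verification is that each term produced by Petersen's formula is in fact a Tate twist of one of the eleven $\phi_j$: for the main term this is Theorem~\ref{thm-CL}, and for the other terms it is immediate, since in the range $a+b+3\le22$ (so $a+b\le18$) each even integer $c$ that appears satisfies $c\le 22$, whence $S[c]$ is a multiple of one of $1,\phi_2,\phi_3,\phi_4,\phi_5,\phi_7$, and likewise every power $\Ll^k$ appearing is a constituent of $e_c$ and so automatically satisfies $2k\le a+b+6$ and $k\in W_{(a,b)}$.

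The one real obstacle is bookkeeping. One must start from the precise statement of Petersen's formula, with all of its Eisenstein/``extra'' terms, and in particular fix the sign and normalization conventions governing the main term $S[a-b,b+3]$ in the Saito--Kurokawa range $a=b$, and then match the finitely many explicit exponents occurring there against the four-element set $W_{(a,b)}$ and the inequality $2k+w(\phi_j)\le a+b+6$. There is no conceptual difficulty beyond this: once the formula is written out, the verification is a finite list of elementary inequalities, and — as observed above — the Hodge--Tate and motivic-weight constraints are inherited automatically from the Faltings--Chai bound (Theorem~\ref{thm-FaCh}) and Deligne's weight estimate (Remark~\ref{rmk-wtAg}).
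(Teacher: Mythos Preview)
Your proposal is correct and follows the same approach as the paper: both invoke Petersen's explicit formula \cite[Theorem~2.1]{Petersen} and then identify the nonzero main terms $S[a-b,b+3]$ in the range $a+b+3\le 22$ (namely $\phi_6,\phi_8,\phi_9,\phi_{10}$ together with the two Saito--Kurokawa cases $(7,7)$ and $(9,9)$, where $S[0,10]=S[18]+\Ll^9+\Ll^8$ and $S[0,12]=S[22]+\Ll^{11}+\Ll^{10}$). Your additional observation---that the Hodge--Tate and motivic-weight constraints defining $\mathbf\Psi_{(a,b)}$ are automatic for any irreducible constituent of $e_c$ by Theorem~\ref{thm-FaCh} and Remark~\ref{rmk-wtAg}---is a valid shortcut that spares the term-by-term verification of the Eisenstein pieces in Petersen's formula; the paper leaves this implicit in the sentence ``The result now follows from \cite[Theorem~2.1]{Petersen}.''
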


\begin{proof} The only $(a,b)$ with $a+b+3 \leq 22$ such that $\dim S_{a-b,b+3}(\Sp_4(\Z)) >0$ are $(11,5)$, $(11,7)$, $(13,5)$, $(15,3)$, which correspond to $\phi_6$, $\phi_8$, $\phi_9$, $\phi_{10}$, respectively, together with $(7,7)$ and  $(9,9)$. The last two spaces are spanned by an eigenform which is a Saito--Kurokawa lift. The corresponding Galois representations are reducible. More precisely, they are $S[0,10]=S[18]+\Ll^9+\Ll^8$ and $S[0,12]=S[22]+\Ll^{11}+\Ll^{10}$, respectively; see for instance \cite{Weissauer2}. The result now follows from \cite[Theorem 2.1]{Petersen}. 
\end{proof}
\end{subsection}

\begin{subsection}{The boundary of $\boldsymbol{\Mm_{3,n}}$} \label{sec-boundary}
Computing $e_{c,\mu}(\Mm_{0,n} \otimes \overline \Q )$ for any $n$ and partition $\mu$ is not difficult; for details see for instance \cite{Getzler}.

Using Lemma~\ref{lem-transl} and the formula for $e_c(\Aa_1 \otimes \overline \Q ,\V_{a})$ described in Section~\ref{sec-Ag}, one can compute the Euler characteristic $e_{c,\mu}(\Mm_{1,n} \otimes \overline \Q )$ for any $n$ and partition $\mu$; see \cite{G-res}.

There is a stratification
\begin{equation} 
\Aa_2 = t_2(\Mm_2)  \; \sqcup \; \Aa_1^{\times 2}/\mathbb S_2.   
\end{equation}
Again, using the formula for $e_c(\Aa_1 \otimes \overline \Q ,\V_{a})$, we can compute $e_c((\Aa_1^{\times 2}/\mathbb S_2) \otimes \overline \Q ,\V_{a,b})$ for any local system $\V_{a,b}$. For details on this computation, see for instance \cite{Petersen_d}. Now, using the formula for $e_c(\Aa_2 \otimes \overline \Q ,\V_{a,b})$ mentioned in Section~\ref{sec-Ag} and Lemma~\ref{lem-transl}, we can also compute $e_{c,\mu}(\Mm_{2,n} \otimes \overline \Q )$ for any $n$ and partition $\mu$.

\begin{rmk} For any $n \leq 9$ and partition $\mu$, $e_{c,\mu}(\Mm_{2,n} \otimes \overline \Q )$ is a polynomial in $\Ll$. Adding the contribution from the boundary, we see that the same statement holds for $\Mmb_{2,n}$.
(Note that $S[12]$ in $H^{11}_c(\Mm_{1,11})$ comes with the alternating representation.) But, one finds $\Ll \, S[12]$ both in $e_{c,[1^{10}]}(\Mm_{2,10} \otimes \overline \Q )$ and in $e_{c,[1^{10}]}(\Mmb_{2,10} \otimes \overline \Q )$.
\end{rmk}

Together, this gives us all the pieces to compute $e_{c,\mu}(\partial \Mm_{3,n} \otimes \overline \Q )$ for any $n$ and partition $\mu$ using the Getzler--Kapranov formula described in Section~\ref{sec-Mgn}.
\end{subsection}

\begin{subsection}{The case $\boldsymbol{\Aa_3}$}
In \cite[Conjecture 7.1]{BFvdG2} there is a conjectural formula, corroborated by extensive computations, for $e_c(\Aa_3\otimes \overline \Q,\V_{a,b,c})$ with ``main term'' $S[a-b,b-c,c+4]$, described below. For any eigenform $f \in S_{j,k,l}(\Sp_6(\Z))$, we conjecture in \cite{BFvdG2} that there is a corresponding $8$-dimensional Galois representation $V_f$ similarly to the cases of $\SL_2(\Z)$ and $\Sp_4(\Z)$. If $f_1,\ldots,f_N$ is any basis of eigenforms of $S_{j,k,l}(\Sp_6(\Z))$, then we let $S[j,k,l]$ denote the direct sum of the Galois representations $V_{f_i}$ for $1 \leq i \leq N$. The only $(a,b,c)$ with $a+b+c+6 \leq 22$ such that $\dim S_{a-b,b-c,c+4}(\Sp_6(\Z)) \neq 0$ is $(8,4,4)$; see \cite{Taibi}. This is conjectured to be a lift with Galois representation
\begin{equation} \label{conj844}
S[4,0,8]=\left(S[12]+\Ll^6+\Ll^5\right)\, S[12].
\end{equation}

These conjectures together tell us that $e_c(\Aa_3\otimes \overline \Q,\V_{a,b,c})$ is in $\mathbf \Psi_{(a,b,c)}$ for $a+b+c+6\leq 22$. We will now arrive at this conclusion assuming  Conjecture~\ref{conj-gal} instead. 

\begin{prop} \label{prop-A3phi} Assume that Conjecture~\ref{conj-gal} holds. Then $e_c(\Aa_3 \otimes \overline \Q ,\V_{a,b,c})$ is in $\mathbf \Psi_{(a,b,c)}$ for $a+b+c+6\leq 22$. 
\end{prop}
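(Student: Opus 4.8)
The plan is to stratify $\Aa_3$ by the decomposition type of a principally polarized abelian threefold, to handle the open Torelli stratum by means of Proposition~\ref{prop-Mgn} (which is where Conjecture~\ref{conj-gal} gets used), to handle the two remaining strata by Künneth and the $\GSp$-branching rule, and thereby to reduce the statement to a numerical point governed by the motivic-weight bound~$22$. First I would dispose of the Hodge--Tate and motivic-weight conditions once and for all. Since $H^i_c(\Aa_3\otimes\overline\Q,\Vla)$ vanishes for $i>2\dim\Aa_3=g(g+1)=12$, Remark~\ref{rmk-wtAg} shows that every irreducible constituent of every such group has motivic weight at most $|\lambda|+12=g(g+1)+|\lambda|$, and Theorem~\ref{thm-FaCh} shows that its Hodge--Tate weights lie in $W_\lambda$. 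Hence any $\Ll^k\phi_j$ occurring with nonzero coefficient in $e_c(\Aa_3\otimes\overline\Q,\Vla)$ — which then occurs inside some $H^i_c$ — automatically satisfies $2k+w(\phi_j)\le g(g+1)+|\lambda|$ and $k+w_j\in W_\lambda$ for every Hodge--Tate weight $w_j$ of $\phi_j$. So it is enough to prove that $e_c(\Aa_3\otimes\overline\Q,\Vla)$ is a $\Z$-linear combination of the classes $\Ll^k\phi_j$. If $a+b+c$ is odd this is immediate, since $-1$ acts on $\Vla$ by $(-1)^{a+b+c}=-1$ and so all these cohomology groups vanish; assume henceforth $a+b+c$ even.

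Next I would use the decomposition of $\Aa_3$ by product type,
\[
\Aa_3 \;=\; t_3(\Mm_3)\;\sqcup\;\bigl(t_2(\Mm_2)\times\Aa_1\bigr)\;\sqcup\;\bigl(\Aa_1^{\times 3}/\s_3\bigr),
\]
which is a genuine disjoint union because every indecomposable principally polarized abelian variety of dimension at most $3$ is a Jacobian. By additivity of the Euler characteristic (cf.~\eqref{eq-add}), $e_c(\Aa_3\otimes\overline\Q,\Vla)$ is the sum of the three corresponding Euler characteristics, and it suffices to show each is a $\Z$-combination of the $\Ll^k\phi_j$. For the open stratum, \eqref{eq-equal} (here using $a+b+c$ even) gives $e_c(t_3(\Mm_3)\otimes\overline\Q,\Vla)=e_c(\Mm_3\otimes\overline\Q,t_3^*\Vla)$, and Lemma~\ref{lem-transl} writes this as a $\Z[\Ll]$-combination of the classes $e_{c,\mu}(\Mm_{3,n})$ with $n=a+b+c\le 16$; by Proposition~\ref{prop-Mgn} — applicable since $3\cdot 3-3+n\le 22$, and assuming Conjecture~\ref{conj-gal} — each $e_{c,\mu}(\Mm_{3,n})\in\mathbf\Psi_{6+n}$ is already a $\Z$-combination of the $\Ll^k\phi_j$, so the open-stratum term is too. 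The same argument (now applied to $\Mm_2$ and $\Mm_{2,m}$) shows $e_c(t_2(\Mm_2)\otimes\overline\Q,\V_{b',c'})$ is a $\Z$-combination of the $\Ll^k\phi_j$ for all $b'+c'\le 16$, and of course $e_c(\Aa_1\otimes\overline\Q,\V_{a'})=-1-S[a'+2]$ is one as well.

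For the two product strata I would restrict $\Vla$ along their inclusions and apply the (equivariant) Künneth formula: this expresses their Euler characteristics as $\Z[\Ll]$-combinations of products $e_c(t_2(\Mm_2)\otimes\overline\Q,\V_{b',c'})\cdot e_c(\Aa_1\otimes\overline\Q,\V_{a'})$, respectively of $\s_3$-symmetrizations of triple products of the classes $e_c(\Aa_1\otimes\overline\Q,\V_{a'})$, where the branching rule forces $a'+b'+c'\le a+b+c\le 16$ (and only summands with $a'$ and $b'+c'$ even, resp.\ with $a',b',c'$ all even, contribute). Now a product of $\Z$-combinations of the $\Ll^k\phi_j$ is again such a combination as soon as all but one of the factors lies in $\Z[\Ll]$, because $\Ll^{k_1}\cdot\Ll^{k_2}\phi_j=\Ll^{k_1+k_2}\phi_j$; by contrast a tensor product of two $\phi_j$'s with $j\ge 2$, or a $\Sym^2$ or $\wedge^2$ of such a $\phi_j$, is in general not in the $\Z$-span of the $\Ll^k\phi_j$. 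But $e_c(\Aa_1\otimes\overline\Q,\V_{a'})$ lies in $\Z[\Ll]$ unless $a'\ge 10$, and $e_c(\Aa_2\otimes\overline\Q,\V_{b',c'})$ — hence also $e_c(t_2(\Mm_2)\otimes\overline\Q,\V_{b',c'})$ — lies in $\Z[\Ll]$ unless $b'\ge 10$ or $b'+c'\ge 14$ (this follows from the explicit formula underlying Lemma~\ref{lem-A2phi} and the vanishing of the relevant spaces of elliptic and Siegel cusp forms). Two non-polynomial tensor factors would force either two of the weights $a',b'$ (or two of $a',b',c'$) to be at least $10$, or one of them at least $10$ together with $b'+c'\ge 14$ — in each case a sum exceeding $16$; since $a'+b'+c'\le 16$ this never happens, so at most one factor per summand fails to lie in $\Z[\Ll]$, and in particular no repeated weight $\ge 10$ occurs, so the offending $\Sym^2$/$\wedge^2$ terms are absent. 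Each product-stratum term is therefore a $\Z$-combination of the $\Ll^k\phi_j$, and together with the open stratum this proves the reduced claim, hence the proposition.

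The step I expect to be the main obstacle is the numerical bookkeeping behind the two thresholds just quoted. Concretely, one must (i) pin down exactly the $(l,m)$ for which $e_c(\Aa_2\otimes\overline\Q,\V_{l,m})$ fails to lie in $\Z[\Ll]$, using the precise shape of Petersen's formula and the list of weights for which $S_k(\SL_2(\Z))$ and $S_{j,k}(\Sp_4(\Z))$ are nonzero; and (ii) understand the branchings $\Vla|_{\GSp_2\times\GSp_4}$ and $\Vla|_{\GSp_2^{\times 3}}$ well enough to confirm that no summand carries two (or three) non-polynomial tensor factors whose weights add up to more than $16$ — which is exactly the point where the motivic-weight bound $22$ of Theorem~\ref{thm-CL} enters. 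Everything else is routine: the Künneth and branching computations are of the type already used for the boundary strata in Section~\ref{sec-boundary} and the references cited there, while the Hodge--Tate and weight constraints defining $\mathbf\Psi_{(a,b,c)}$ were handed to us for free in the first step by Theorem~\ref{thm-FaCh} and Remark~\ref{rmk-wtAg}.
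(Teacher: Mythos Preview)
Your proof follows exactly the paper's architecture: the stratification~\eqref{eq-stratA}, the open stratum handled via \eqref{eq-equal}, Lemma~\ref{lem-transl}, and Proposition~\ref{prop-Mgn}, and the passage to $\mathbf\Psi_{(a,b,c)}$ via Theorem~\ref{thm-FaCh} together with Remark~\ref{rmk-wtAg} (you perform this step first, the paper last; that is immaterial). The paper dispatches the two product strata in a single phrase (``using the same results''), and your longer treatment is precisely what that phrase hides.

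Your concern about tensor products $\phi_{j_1}\otimes\phi_{j_2}$ is the right thing to check, but the route through explicit polynomiality thresholds for $e_c(\Aa_2,\V_{b',c'})$ and $e_c(t_2(\Mm_2),\V_{b',c'})$ is harder than necessary --- it is exactly the bookkeeping you yourself flag as the main obstacle, and pinning down the thresholds correctly requires the full shape of Petersen's formula. The argument the paper has in mind is the one already spelled out under ``for dimension reasons'' in the proof of Proposition~\ref{prop-Mgn}: every $\phi_j$ with $j\ge2$ has $w(\phi_j)\ge11$; if the $\Aa_1$-factor contributes such a $\phi_{j_1}$ then $a'\ge10$, so the branching bound $a'+b'+c'\le|\lambda|\le16$ forces $b'+c'\le6$, and then the genus-$2$ factor lies in $\mathbf\Psi_{b'+c'+3}\subset\mathbf\Psi_9$, which contains only powers of~$\Ll$. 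The $\Aa_1^{\times3}$-stratum is even easier, since two factors with $a'_i\ge10$ already overshoot~$16$. This replaces your entire final paragraph with a two-line check and dissolves the obstacle you were anticipating.
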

\begin{proof} 
There is a stratification
\begin{equation} \label{eq-stratA}
\Aa_3 = t_3(\Mm_3)  \; \sqcup \; t_2(\Mm_2) \times \Aa_1 \;\sqcup \; \Aa_1^{\times 3}/\mathbb S_3.   
\end{equation}
Assume that $a+b+c+6\leq 22$. It follows from Equation~\eqref{eq-equal}, Proposition~\ref{prop-Mgn}, and Lemma~\ref{lem-transl} that $e_c(t_3(\Mm_3) \otimes \overline \Q ,\V_{a,b,c})$ is in $\mathbf \Psi_{a+b+c+6}$. Using the same results, we find that $e_c((t_2(\Mm_2) \times \Aa_1) \otimes \overline \Q ,\V_{a,b,c})$ and $e_c((\Aa_1^{\times 3}/\mathbb S_3) \otimes\overline \Q ,\V_{a,b,c})$ are in $\mathbf \Psi_{a+b+c+6}$. The result now follows from the stratification~\eqref{eq-stratA} and Theorem~\ref{thm-FaCh} together with Remark~\ref{rmk-wtAg}.\end{proof}
\end{subsection}
\end{section}

\begin{section}{Finding linear relations} \label{sec-relations}
For any partition $\mu$ of $n \leq 16$, we know (assuming Conjecture~\ref{conj-gal}) by Theorem~\ref{thm-X} and Poincar\'e duality that there are integers $c^{\mu}_{k,j}$ such that
\begin{equation} \label{eq-munkn}
e_{c,\mu}\left(\Mmb_{3,n} \otimes \overline \Q \right)=\sum_{i=0}^{5+n} \, \sum_{2k+w(\phi_j)=i} c^{\mu}_{k,j} \left(\Ll^k+\Ll^{6+n-k-w(\phi_j)}\right) \, \phi_j +\sum_{2k+w(\phi_j)=6+n} c^{\mu}_{k,j} \Ll^k \, \phi_j.   
\end{equation}

Similarly, for every $\lambda$ such that $|\lambda| \leq 16$, there are integers $d^{\lambda}_{k,j}$ such that
\begin{equation}\label{eq-aunkn}
e_{c}\left(\Aa_{3} \otimes \overline \Q, \Vla \right)=\sum_{k,j} d^{\lambda}_{k,j} \Ll^k \phi_j, 
\end{equation}
where the sum is over $k,j$ such that $\Ll^k \phi_j \in \mathbf \Psi_{\lambda}$.
Taking dimensions we have 
\begin{equation*}
E_{c,\mu}\left(\Mmb_{3,n} \otimes \overline \Q \right)=\sum_{i=0}^{5+n} \, \sum_{2k+w(\phi_j)=i} 2 \, c^{\mu}_{k,j} \, \dim_{\Q_{\ell}}\phi_j +\sum_{2k+w(\phi_j)=6+n} c^{\mu}_{k,j} \dim_{\Q_{\ell}} \phi_j
\in \Z    
\end{equation*}
and 
$$E_{c}\left(\Aa_{3} \otimes \overline \Q, \Vla \right)=\sum_{k,j} d^{\lambda}_{k,j} 
\dim_{\Q_{\ell}}\phi_j\in \Z.$$

\begin{subsection}{Integer-valued Euler characteristic} \label{sec-inteul} 
  The integer-valued Euler characteristics $E_c(\Mm_{3}\otimes \overline \Q,\Vla)$ and $E_c(\Aa_{3}\otimes \overline \Q,\Vla)$ can be computed for any $\lambda$ using an algorithm found in \cite{BvdG}; see Tables 3, 4 and 6 in \textit{op.~cit.}~for some values. Using an analogue of Lemma~\ref{lem-transl}, we can then compute $E_{c,\mu}(\Mm_{3,n} \otimes \overline \Q)$ for any~$n$. By the result in Section~\ref{sec-boundary}, we can, for any $n$ and partition $\mu$, compute $e_{c,\mu}(\partial \Mm_{3,n} \otimes \overline \Q )$ and $E_{c,\mu}(\partial \Mm_{3,n} \otimes \overline \Q )$. (Note that by the work of Gorsky, $E_{c,\mu}(\Mm_{g,n} \otimes \overline \Q)$ can in fact be computed for any $g,n$, and $\mu$; see \cite{Gorsky}.)

 With this we can compute $E_{c,\mu}(\Mmb_{3,n} \otimes \overline \Q)$ (respectively, $E_c(\Aa_{3} \otimes \overline \Q,\Vla)$), which gives a linear relation between the $c^{\mu}_{k,j}$ (respectively, the $d^{\lambda}_{k,j}$) for all $n \leq 16$ (respectively, $|\lambda| \leq 16$).
\end{subsection}

\begin{subsection}{Traces of Frobenius} \label{sec-traces}
Using the computer we have found the necessary information about curves of genus at most $3$ over finite fields $\F_q$ with $q \leq 25$, together with their zeta functions, in order to apply the Lefschetz trace formula to compute $\Tr(F_q,e_{c,\mu}(\Mmb_{3,n}\otimes \overline{\F}_q))$ and $\Tr(F_q,e_c(\Aa_3\otimes \overline{\F}_q,\Vla))$; see \cite[Section 8]{BFvdG2} for more details.

It follows from \eqref{eq-frobenii} and \eqref{eq-munkn} that 
\begin{equation*} \label{eq-trM}
  \Tr\left(F_q,e_{c,\mu}\left(\Mmb_{3,n}\otimes \overline{\F}_q\right)\right) =\sum_{i=0}^{5+n} \, \sum_{2k+w(\phi_j)=i} c^{\mu}_{k,j} \left(q^k+q^{6+n-k-w(\phi_j)}\right) \, \Tr\left(F_q,\phi_j\right)+\sum_{2k+w(\phi_j)=6+n} c^{\mu}_{k,j} q^k \, \Tr\left(F_q,\phi_j\right)
\end{equation*}
for every partition $\mu$ of $n\leq 16$. Also, 
\begin{equation*}\label{eq-trA}
  \Tr\left(F_q,e_c\left(\Aa_3\otimes \overline{\F}_q,\Vla\right)\right)=\sum_{k,j} d^{\lambda}_{k,j} q^k \Tr\left(F_q,\phi_j\right)
\end{equation*}
follows, for every $\lambda$ such that $|\lambda| \leq 16$, from \eqref{eq-aunkn} and \cite[Corollary 3.2]{BvdG} together with the stratification \eqref{eq-stratA}.  The contribution from the boundary can be computed by applying the trace of Frobenius to $e_{c,\mu}(\partial \Mm_{3,n} \otimes \overline \Q )$; see Section~\ref{sec-boundary}.  The equality of Frobenii for $\partial \Mm_{3,n}$ follows from $\partial \Mm_{3,n}=\Mmb_{3,n} \setminus \Mm_{3,n}$ together with \eqref{eq-frobenii} for $\Mmb_{3,n}$ and \cite[Corollary 3.2]{BvdG} together with Lemma~\ref{lem-transl} for $\Mm_{3,n}$.  It is well known how to compute $\Tr(F_q,\phi_j)$ for $q \leq 25$ and $1 \leq j \leq 11$. This gives an additional $14$ linear relations between the $c^{\mu}_{k,j}$ (respectively, the $d^{\lambda}_{k,j} $) for all $n \leq 16$ (respectively, $|\lambda| \leq 16$).
 \end{subsection}

\end{section}

\begin{section}{Euler characteristics of local systems on \texorpdfstring{$\boldsymbol{\Aa_3}$}{A\textunderscore 3}}
\begin{exa} 
The following Galois representations generate $\Psi_{(14,2,0)}$:
$$1,\, \Ll,\, \Ll^4,\, \Ll^5,\, S[18],\, \Ll \, S[18],\, \Ll^4\, S[18],\, \Ll^5\, S[18],\, S[22],\, \Ll \, S[22],\, S[12,6],\, \Ll \, S[12,6].$$
Assuming Conjecture~\ref{conj-gal} we have by Proposition~\ref{prop-A3phi} that 
\[
e_{c}\left(\Aa_{3} \otimes \overline \Q, \Vla \right)=
\alpha_1 \, 1+
\alpha_2 \, \Ll+
\alpha_3 \, \Ll^4+
\alpha_4 \, \Ll^5+ 
\alpha_5 \, S[18] +
\ldots + 
\alpha_{12} \, \Ll \, S[12,6]
\]
for some integers $\alpha_1,\ldots,\alpha_{12}$. 
Let us write the first three of the linear relations we found as in Section~\ref{sec-relations}: 
\begin{align*} 
4=E_c\left(\Aa_3,\V_{14,2,0}\right)=&\alpha_1+\alpha_{2}+\alpha_{3}+\alpha_{4}+2\alpha_{5}+\cdots+4\alpha_{11}+4\alpha_{12}, 
\\
-270=\Tr\left(F_2,e_c\left(\Aa_3,\V_{14,2,0}\right)\right)=&\alpha_1+2 \alpha_2+2^4\alpha_3+2^5\alpha_4-528\alpha_5+\cdots
 -240\alpha_{11}+2\cdot(-240) \alpha_{12},\\
67800= \Tr\left(F_3,e_c\left(\Aa_3,\V_{14,2,0}\right)\right)=&\alpha_1+3 \alpha_2+3^4\alpha_3+3^5\alpha_4-4284\alpha_5+\cdots
 + 68040\alpha_{11}+ 3 \cdot 68040\alpha_{12}.
\end{align*}
If we continue and take the first $12$ linear equations and solve, we find that
$$e_c\left(\Aa_3,\V_{14,2,0}\right)=\Ll-\Ll^5+S[12,6].$$
\end{exa}

\begin{thm} \label{thm-A3eul} 
Assume that Conjecture~\ref{conj-gal} and equation~\eqref{conj844} hold. Then \cite[Conjecture 7.1]{BFvdG2} is true for all $\lambda$ such that $|\lambda| \leq 16$.
\end{thm}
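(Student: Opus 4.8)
The plan is to reduce the conjectural formula \cite[Conjecture 7.1]{BFvdG2} to a finite linear-algebra problem and then verify that this problem has a unique solution for each $\lambda$ with $|\lambda|\le 16$, matching the conjectured value. First I would invoke Proposition~\ref{prop-A3phi}: assuming Conjecture~\ref{conj-gal} (and \eqref{conj844} in the single case $\lambda=(8,4,4)$, which is needed because that case involves a genuine genus-$3$ Siegel eigenform whose Galois representation is only known conjecturally), we know that $e_c(\Aa_3\otimes\overline\Q,\Vla)$ lies in the submonoid $\mathbf\Psi_\lambda$, hence is an integer combination $\sum_{k,j} d^\lambda_{k,j}\,\Ll^k\phi_j$ as in \eqref{eq-aunkn}, where the sum runs over the finitely many pairs $(k,j)$ with $\Ll^k\phi_j\in\mathbf\Psi_\lambda$. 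The number of such pairs is small — in the worked examples it is $8$ or $12$ — so the unknowns $d^\lambda_{k,j}$ form a short integer vector.

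Next I would assemble the linear constraints described in Section~\ref{sec-relations}. On the one hand, the integer-valued Euler characteristic $E_c(\Aa_3\otimes\overline\Q,\Vla)$ is computable by the algorithm of \cite{BvdG} (Section~\ref{sec-inteul}); taking dimensions of \eqref{eq-aunkn} gives one linear relation $\sum_{k,j} d^\lambda_{k,j}\dim_{\Q_\ell}\phi_j = E_c(\Aa_3,\Vla)$. On the other hand, for each prime power $q\le 25$ with $q$ a prime to $\ell$, the computer counts of genus-$\le 3$ curves over $\F_q$ together with their zeta functions yield $\Tr(F_q,e_c(\Aa_3\otimes\overline{\F}_q,\Vla))$ via the Lefschetz trace formula and the stratification \eqref{eq-stratA} (using \cite[Corollary 3.2]{BvdG} and the boundary computation of Section~\ref{sec-boundary}); by \eqref{eq-frobenii} this equals $\sum_{k,j} d^\lambda_{k,j}\,q^k\,\Tr(F_q,\phi_j)$, and the traces $\Tr(F_q,\phi_j)$ are classically known. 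This produces, for each $\lambda$, a linear system in the $d^\lambda_{k,j}$ with roughly $15$ equations (one dimension relation plus $14$ trace relations) in at most a dozen unknowns.

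The final step is computational: for each of the finitely many $\lambda$ with $|\lambda|\le 16$, solve the overdetermined linear system and check that (i) it has a unique solution over $\Q$, which a fortiori must be integral since $e_c(\Aa_3,\Vla)\in\mathbf\Psi_\lambda$, and (ii) this solution agrees with the value predicted by \cite[Conjecture 7.1]{BFvdG2}. Since all data and scripts are available at the cited repository, the verification is a matter of running the computation; the consistency of (i) across all $\lambda$ is itself strong corroboration, and (ii) is then a direct comparison. I would organize the write-up so that the worked case $\lambda=(14,2,0)$ in the preceding example serves as the template, and simply state that the same procedure, carried out for every $\lambda$ with $|\lambda|\le 16$, yields the conjectured formula.

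The main obstacle I anticipate is not conceptual but one of coverage and bookkeeping: one must be sure that for \emph{every} $\lambda$ with $|\lambda|\le 16$ the combined rank of the dimension relation and the $14$ Frobenius relations actually pins down all the unknowns $d^\lambda_{k,j}$ — i.e.\ that $25$-element fields and the integer Euler characteristic together suffice, with no residual ambiguity among the (at most eleven) motives $\phi_j$ tensored with powers of $\Ll$ that are allowed in $\mathbf\Psi_\lambda$. Closely related is the need to handle the weight-$22$ borderline cases and the lift case $\lambda=(8,4,4)$ carefully, since there Poincaré duality and the explicit decomposition \eqref{conj844} must be used to keep the list of candidate $\Ll^k\phi_j$ finite and correct. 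Once the rank condition is confirmed for each $\lambda$ — which the authors have checked by computer — the theorem follows.
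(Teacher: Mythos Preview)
Your proposal is correct and follows essentially the same route as the paper: invoke Proposition~\ref{prop-A3phi} to place $e_c(\Aa_3\otimes\overline{\Q},\Vla)$ in the rank-$\le 12$ monoid $\mathbf\Psi_\lambda$, then use the $15$ linear relations of Section~\ref{sec-relations} (integer Euler characteristic plus Frobenius traces for $q\le 25$) to solve uniquely for the coefficients $d^\lambda_{k,j}$, and finally compare with \cite[Conjecture~7.1]{BFvdG2}. One small clarification: equation~\eqref{conj844} is not needed for Proposition~\ref{prop-A3phi} or for pinning down $e_c$ in $\mathbf\Psi_\lambda$---that step requires only Conjecture~\ref{conj-gal}---but rather enters at the final comparison stage, where the conjectural formula of \cite{BFvdG2} has main term $S[a-b,b-c,c+4]$; by \cite{Taibi} this space vanishes for all $|\lambda|\le 16$ except $\lambda=(8,4,4)$, and \eqref{conj844} is what translates $S[4,0,8]$ into the $\phi_j$-language so the match can be verified. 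Your mention of Poincar\'e duality is also slightly off-target here (it is used for $\Mmb_{3,n}$, not for the $\Aa_3$ argument, where the Hodge--Tate constraints come from Theorem~\ref{thm-FaCh}), but this does not affect the validity of the plan.
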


\begin{proof}
For all $\lambda$ such that $|\lambda| \leq 16$, $\mathbf \Psi_{\lambda}$ has rank $r_{\lambda}$ at most $12$. In Section~\ref{sec-relations} we found $15$ linear relations. The $r_{\lambda}$ first relations, coming from the integer-valued Euler characteristic together with the traces of Frobenius (ordered by the size of $q$), for the $r_{\lambda}$ unknowns $d^{\lambda}_{k,j}$ in \eqref{eq-aunkn} turn out to be linearly independent. These can therefore be used to determine $e_{c}(\Aa_{3} \otimes \overline \Q, \Vla )$. The results agree with the ones given in \cite[Section~10.2.2, pp.~120--121]{BFvdG2}.
  By \cite{Taibi} it follows that $s_{n(\lambda)}=0$ for all $|\lambda| \leq 16$ except when $\lambda=(8,4,4)$, for which $s_{n(\lambda)}=1$. Applying equation~\eqref{conj844} in the case $\lambda=(8,4,4)$ gives the result.
\end{proof}

\begin{rmk} \label{rem6final} The Euler characteristics $e_{c}(\Aa_{3} \otimes \overline \Q, \Vla )$ are determined unconditionally for all $|\lambda| \leq 6$; see \cite[Theorem 8.1]{BFvdG2}.  
\end{rmk}

\end{section}

\begin{section}{The cohomology of \texorpdfstring{$\boldsymbol{\Mmb_{3,n}}$}{Mbar\textunderscore{(3,n)}} for \texorpdfstring{$\boldsymbol{n \leq 14}$}{n at most 14}} 
Compared to the situation for local systems on $\Aa_3$, we have less information about the Hodge--Tate weights of the cohomology groups of $\Mmb_{3,n}$.

\begin{thm} \label{thm-rational} Let $\mathcal X$ be a smooth and proper Deligne--Mumford stack over\, $\Q_{\ell}$ of dimension $d$ with unirational coarse moduli space. For any $m \neq 0,2d$, the $\ell$-adic $\Gal(\overline{\Q}_{\ell}/\Q_{\ell})$-representation 
$H_c^m(\mathcal X \otimes \overline{\Q}_{\ell})$ is crystalline and cannot contain the Hodge--Tate weight $m$.
\end{thm}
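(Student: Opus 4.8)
The plan is to treat the two assertions separately. Since $\mathcal X$ is proper, $H^m_c(\mathcal X\otimes\overline\Q_{\ell})=H^m(\mathcal X\otimes\overline\Q_{\ell})$, and crystallinity is the $p$-adic comparison theorem $C_{\mathrm{crys}}$ (Faltings, Tsuji) in its version for smooth proper Deligne--Mumford stacks with good reduction --- which is the relevant case, as our $\mathcal X$ will even be defined over $\Z$; in general $H^m_c(\mathcal X\otimes\overline\Q_{\ell})$ is at least de Rham. The substantial point is the absence of the Hodge--Tate weight $m$. For this I would use the de Rham comparison: with the normalization of the excerpt, in which $\Ll$ has Hodge--Tate weight $1$, a de Rham representation $V$ contains the Hodge--Tate weight $m$ exactly when $\mathrm{gr}^m_F D_{\mathrm{dR}}(V)\neq 0$, and for $V=H^m$ of a smooth proper variety this graded piece is $H^0$ of the sheaf of $m$-forms, by degeneration of the Hodge-to-de Rham spectral sequence in characteristic $0$. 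So the goal becomes to show that the relevant $H^0$ of $m$-forms vanishes; but first I would replace $\mathcal X$ by a smooth projective scheme, to stay in the setting of classical Hodge theory.

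First I would pass to the coarse moduli space $X$ of $\mathcal X$ and then resolve it. As $\mathcal X$ is a Deligne--Mumford stack in characteristic $0$, the coarse map $\pi\colon\mathcal X\to X$ has classifying stacks of finite groups as geometric fibres, so $R\pi_*\Q_{\ell}=\Q_{\ell}$ and $H^m(\mathcal X\otimes\overline\Q_{\ell})\cong H^m(X\otimes\overline\Q_{\ell},\Q_{\ell})$ as $\Gal(\overline\Q_{\ell}/\Q_{\ell})$-representations. The variety $X$ has at worst quotient singularities, hence is a $\Q$-homology manifold, so its dualizing complex is $\Q_{\ell,X}[2d]$ up to a Tate twist ($d=\dim\mathcal X$). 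Choosing a resolution $f\colon\widetilde X\to X$ over $\Q_{\ell}$, the composite of the adjunction unit $\Q_{\ell,X}\to Rf_*\Q_{\ell,\widetilde X}$ with the trace morphism $Rf_*\Q_{\ell,\widetilde X}\to\Q_{\ell,X}$ is the identity --- it is so over the dense open on which $f$ is an isomorphism, and $\mathrm{End}_{D^b_c(X)}(\Q_{\ell,X})=\Q_{\ell}$ --- so $H^m(X\otimes\overline\Q_{\ell},\Q_{\ell})$ is a $\Gal(\overline\Q_{\ell}/\Q_{\ell})$-equivariant direct summand of $H^m(\widetilde X\otimes\overline\Q_{\ell},\Q_{\ell})$. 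Since $\widetilde X$ is smooth, proper, and birational to $X$, it is unirational; so it suffices to prove the statement for the smooth projective variety $\widetilde X$.

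For $\widetilde X$, the de Rham comparison theorem gives that $H^m(\widetilde X\otimes\overline\Q_{\ell},\Q_{\ell})$ is de Rham with $\mathrm{gr}^m_F D_{\mathrm{dR}}\cong H^0(\widetilde X,\Omega^m_{\widetilde X/\Q_{\ell}})$, so the Hodge--Tate weight $m$ occurs there if and only if this group is nonzero. But $H^0(\widetilde X,\Omega^m_{\widetilde X/\Q_{\ell}})=0$ for every $m\ge 1$: if $m>d$ because $\Omega^m_{\widetilde X}=0$, and if $1\le m\le d$ because $\widetilde X$ is rationally connected, being unirational, and rationally connected smooth projective varieties over a field of characteristic $0$ carry no nonzero global differential forms --- the standard argument being that a dominant rational map $\mathbb P^N\dashrightarrow\widetilde X$, once resolved, together with the birational invariance of $H^0$ of tensor powers of $\Omega^1$ among smooth projective varieties and generic smoothness, produces an injection $H^0(\widetilde X,(\Omega^1)^{\otimes m})\hookrightarrow H^0(\mathbb P^N,(\Omega^1)^{\otimes m})=0$, and $\Omega^m$ is a direct summand of $(\Omega^1)^{\otimes m}$ in characteristic $0$. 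Hence the Hodge--Tate weight $m$ is absent from $H^m(\widetilde X\otimes\overline\Q_{\ell})$, and therefore from its direct summand $H^m_c(\mathcal X\otimes\overline\Q_{\ell})$. (For $m=2d$ with $d\ge 1$ this is anyway automatic, since the only Hodge--Tate weight of $H^{2d}$ is $d<2d$, and $m=0$ is excluded.)

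I expect the principal obstacle to be this first reduction: making sure that the passage from the Deligne--Mumford stack to a smooth projective scheme does not distort the Galois action --- that is, that $R\pi_*\Q_{\ell}=\Q_{\ell}$ for the coarse map and that a resolution of the (at worst quotient-singular) coarse space realizes $H^m(X)$ as a Galois-equivariant direct summand of $H^m(\widetilde X)$. The latter uses crucially that quotient singularities are $\Q$-homology manifolds, which is what makes the unit/trace argument go through and lets one bypass the full decomposition theorem. One also has to keep the Hodge--Tate weight conventions straight, so that ``weight $m$'' really is the piece $\mathrm{gr}^m_F$, detected by global $m$-forms; and, for the crystallinity half of the statement, one is relying on $\mathcal X$ having good reduction, as it does in the applications to $\Mmb_{3,n}$.
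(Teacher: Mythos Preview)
Your argument is correct and follows the same overall strategy as the paper --- $p$-adic comparison plus vanishing of global $m$-forms on something unirational --- but the execution differs. The paper applies the de Rham comparison isomorphism \emph{directly to the stack}, citing van den Bogaart's thesis for the extension of Faltings--Tsuji to smooth proper Deligne--Mumford stacks, so that $D_{dR}(H^m(\mathcal X\otimes\overline\Q_{\ell}))\cong H^m_{dR}(\mathcal X/\overline\Q_{\ell})$ as filtered vector spaces; this identifies the Hodge--Tate weight $m$ piece with $H^0(\mathcal X\otimes\overline\Q_{\ell},\Omega^m)$ on the stack itself, and one then invokes the birational invariance of $H^0(\Omega^m)$ and its vanishing on projective space. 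You instead first descend to the coarse space and resolve, embedding $H^m(\mathcal X)$ Galois-equivariantly as a direct summand of $H^m(\widetilde X)$ for a smooth projective unirational $\widetilde X$, and only then apply comparison and the classical vanishing of forms. Your route is longer but has the virtue of reducing everything about Hodge--Tate weights to the scheme case, so one never needs to verify that the Hodge filtration on stack de Rham cohomology has the expected graded pieces or that $H^0(\Omega^m)$ is a birational invariant at the stack level; the paper's route is shorter but leans on the stack-level comparison and Hodge theory worked out in \cite{Bogaart}. Your caution about crystallinity versus de Rham (good reduction being needed for the former) is also well placed; the paper is terse on this point.
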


\begin{proof} We follow the notation of \cite{Bogaart} and let $D_{dR}$ denote the de Rham Fontaine-functor. By \cite{Faltings,Tsuji}, generalized in \cite[Corollary 8.12]{Bogaart} to Deligne--Mumford stacks, we have an isomorphism of filtered vector spaces 
$$D_{dR}\left(H^m\left(\mathcal X \otimes \overline{\Q}_{\ell}\right)\right) \cong H^m_{dR}\left(\mathcal X/\overline{\Q}_{\ell}\right).$$ This identifies the part of $H^m(\mathcal X \otimes \overline{\Q}_{\ell})$ of Hodge--Tate weight $m$ with $H^0(\mathcal X \otimes \overline{\Q}_{\ell},\Omega^m)$.  Since $H^0(\mathcal X \otimes \overline{\Q}_{\ell},\Omega^m)$ is a birational invariant, and since it vanishes for any projective space, the result follows.
\end{proof} 

\begin{dfn} Let $\mathbf \Phi'_{i}$ denote the submonoid of $K_0(\Gal_{\Q})$ generated by $\Ll^{k} \,  \phi_j$ for all $k>0$ and $1\leq j \leq 11$ such that $2k+w(\phi_j)=i$. Put $ \mathbf \Phi'_{\leq i}:=\mathbf \Phi_{0}\cup_{l=1}^{i} \mathbf \Phi'_{l}$.
\end{dfn}

\begin{thm} \label{thm-M3eul} 
Assume that Conjecture~\ref{conj-gal} is true. For all partitions $\mu$ of $n \leq 14$, $e_{c,\mu}(\Mm_{3,n} \otimes \overline \Q )$ and $e_{c,\mu}(\overline{\Mm}_{3,n} \otimes \overline \Q )$ can be determined.
\end{thm}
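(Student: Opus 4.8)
The plan is to determine $e_{c,\mu}(\Mmb_{3,n}\otimes\overline{\Q})$ first, reducing it to a small vector of unknown integers and then solving enough linear relations for them; the Euler characteristic of the open part will then follow from the additivity relation~\eqref{eq-add} together with the (separately computable) boundary. The whole argument runs by induction on $n$. For the base cases (small $n$, in particular $\Mm_3$ and $\Mmb_3$) one uses that $e_c(\Mm_3\otimes\overline{\Q})=e_c(t_3(\Mm_3)\otimes\overline{\Q},\V_0)$ by~\eqref{eq-equal}, which is computed from the stratification~\eqref{eq-stratA}, Proposition~\ref{prop-Mgn}, Theorem~\ref{thm-A3eul} (or, since $|\lambda|=0$, unconditionally by Remark~\ref{rem6final}), together with the known Euler characteristics of local systems on $\Aa_1$ and $\Aa_2$; then $e_c(\Mmb_3\otimes\overline{\Q})$ follows from the Getzler--Kapranov formula applied to the genus $0$, $1$ and $2$ pieces.

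First I would fix the unknowns. By Conjecture~\ref{conj-gal}, Theorem~\ref{thm-X} and Poincar\'e duality, $e_{c,\mu}(\Mmb_{3,n}\otimes\overline{\Q})$ has the shape~\eqref{eq-munkn} with integer coefficients $c^{\mu}_{k,j}$, and two inputs cut this list down drastically for $n\le14$. (i) For such $n$ the coarse moduli space of $\Mm_{3,n}$ — hence also that of the birational $\Mmb_{3,n}$ — is rational; this is the source of the bound $n\le14$. Theorem~\ref{thm-rational} then applies to $\Mmb_{3,n}\otimes\Q_\ell$, and since the extreme Hodge--Tate weights of $\phi_j$ are $0$ and $w(\phi_j)$ it forces $c^{\mu}_{0,j}=0$ for all $j\ge2$; equivalently $e_{c,\mu}(\Mmb_{3,n}\otimes\overline{\Q})$ lies in the submonoid generated by $\mathbf \Phi'_{\leq 6+n}$ and its Poincar\'e dual. (ii) By Harer's theorem the virtual cohomological dimension of $\Mm_{3,n}$ is $n+8$, so $H^i_c(\Mm_{3,n}\otimes\overline{\Q})=0$ for $i<n+4$; via the excision sequence for $\Mm_{3,n}\subset\Mmb_{3,n}$ this makes $H^i_{c,\mu}(\Mmb_{3,n}\otimes\overline{\Q})$ computable from the (known) boundary for $i$ below roughly $n+3$, and by Poincar\'e duality the dual range is handled as well. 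Combining (i) and (ii), the only genuinely free coefficients are the $c^{\mu}_{k,j}$ sitting at the top few levels, say with $n+3\le 2k+w(\phi_j)\le n+6$ and $k\ge1$ unless $j=1$; a short check shows that for every $n\le14$ and every $\mu$ this is a set of at most about eight unknowns, in particular at most $15$.

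Next I would compute the boundary. Each stratum~\eqref{eq-stratM} of $\partial\Mm_{3,n}$ is a quotient of a product $\prod_v\Mm_{g(v),n(v)}$ in which every vertex with $g(v)\le2$ is controlled by Section~\ref{sec-boundary}, while every vertex with $g(v)=3$ satisfies $n(v)<n$ and is therefore covered by the inductive hypothesis. Feeding these into the Getzler--Kapranov formula~\eqref{eq:getzlerkapranov} produces $e_{c,\mu}(\partial\Mm_{3,n}\otimes\overline{\Q})$ explicitly as an element of $K_0(\Gal_\Q)$, and with it both $E_{c,\mu}(\partial\Mm_{3,n}\otimes\overline{\Q})$ and the Frobenius traces $\Tr(F_q,e_{c,\mu}(\partial\Mm_{3,n}\otimes\overline{\F}_q))$.

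Finally I would assemble and solve the linear system. The algorithm of \cite{BvdG}, combined with a Lemma~\ref{lem-transl}-type translation, gives $E_{c,\mu}(\Mm_{3,n}\otimes\overline{\Q})$, hence $E_{c,\mu}(\Mmb_{3,n}\otimes\overline{\Q})$ via~\eqref{eq-add}: one linear relation among the $c^{\mu}_{k,j}$. The computer census of curves of genus at most $3$ over $\F_q$ with their zeta functions, for the fourteen prime powers $q\le25$, together with the Lefschetz trace formula gives $\Tr(F_q,e_{c,\mu}(\Mm_{3,n}\otimes\overline{\F}_q))$, hence $\Tr(F_q,e_{c,\mu}(\Mmb_{3,n}\otimes\overline{\F}_q))$; by~\eqref{eq-frobenii}, the shape~\eqref{eq-munkn}, and the known values $\Tr(F_q,\phi_j)$, each such $q$ yields one further relation, for $15$ relations in total. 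The step that I expect to demand the actual work — as opposed to the by-now-standard point counts and Getzler--Kapranov bookkeeping — is checking that these relations genuinely pin down the surviving $c^{\mu}_{k,j}$: one must verify, case by case over all $n\le14$ and all $\mu\vdash n$, that the number of free unknowns does not exceed the number of relations (here both Theorem~\ref{thm-rational} and the Harer vanishing are indispensable, the naive count from~\eqref{eq-munkn} being far larger) and that a subsystem of the right size is nonsingular. This is a finite computation. Once $e_{c,\mu}(\Mmb_{3,n}\otimes\overline{\Q})$ is known, $e_{c,\mu}(\Mm_{3,n}\otimes\overline{\Q})=e_{c,\mu}(\Mmb_{3,n}\otimes\overline{\Q})-e_{c,\mu}(\partial\Mm_{3,n}\otimes\overline{\Q})$ by~\eqref{eq-add}, which also advances the induction.
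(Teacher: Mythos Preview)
Your overall architecture---rationality via Theorem~\ref{thm-rational}, the fifteen relations of Section~\ref{sec-relations}, the boundary via Getzler--Kapranov and induction---matches the paper. The gap is in step~(ii), the appeal to Harer's vcd. From $H^i_c(\Mm_{3,n})=0$ for $i<n+4$ the excision sequence indeed gives $H^i_c(\Mmb_{3,n})\cong H^i_c(\partial\Mm_{3,n})$ for $i\le n+2$, but the right-hand side is \emph{not} known: what the induction and Section~\ref{sec-boundary} supply is $e_c(\partial\Mm_{3,n})\in K_0(\Gal_{\Q})$, not the individual groups $H^i_c(\partial\Mm_{3,n})$. The boundary is a normal-crossings divisor, neither smooth nor proper, so purity does not let you recover $[H^i_c]$ from $e_c$. (One could attempt the Mayer--Vietoris spectral sequence for the cover by the smooth proper closures $\overline{\Mm_\Gamma}$, but that requires computing the $d_1$ restriction maps, genuine geometric input you do not address.) Without (ii) your unknown count reverts to the rank of $\mathbf\Phi'_{\le 6+n}$, which is $17$ for $n=13$ and $18$ for $n=14$---too many for fifteen relations---so your claim of ``at most $15$'' free coefficients is unjustified.

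The paper closes this differently. For $n\le13$ it uses that $H^0_c(\Mmb_{3,n})$ is trivially known and that $H^2_c(\Mmb_{3,n})$ is tautological by Arbarello--Cornalba~\cite{arbarellocornalba}; this removes exactly two unknowns and brings the count to $\le15$, after which the fifteen relations are checked to be independent. For $n=14$ one unknown still remains, and here the paper does \emph{not} solve for $\Mmb_{3,14}$ directly: instead Theorem~\ref{thm-A3eul} gives $e_c(\Aa_3,\V_\lambda)$ for $|\lambda|=14$, which via the stratification~\eqref{eq-stratA} and Lemma~\ref{lem-transl} (together with the already-determined $e_{c,\mu}(\Mm_{3,n'})$ for $n'\le13$) yields $e_{c,\mu}(\Mm_{3,14})$, and then $e_{c,\mu}(\Mmb_{3,14})$ by adding the boundary. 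Your proposal treats all $n\le14$ uniformly, which only works if (ii) works; as written it does not.
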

\begin{proof} 
It is easy to see that the coarse moduli space of $\Mmb_{3,n} \otimes 
\overline{\Q}$ is rational for $n  =14$; hence it is unirational for $n < 14$.
Using Theorem~\ref{thm-rational} and Conjecture~\ref{conj-gal}, we conclude that $H_c^m(\Mmb_{3,n}\otimes \overline{\Q})$ is in $ \mathbf \Phi'_{m}$ for any $0 <m \leq 6+n$.

Since $\Mmb_{g,n}$ is irreducible, we know that
$$H^{0}_c\left(\Mmb_{g,n} \otimes \overline{\Q}\right)$$
is $1$-dimensional, and it will furthermore have a trivial action by $\Gal(\overline{\Q}/\Q)$. In \cite[Theorem 2.2]{arbarellocornalba} there is a formula for the dimension (using duality) of 
$$H^{2}_c\left(\Mmb_{g,n} \otimes \overline{\Q}\right),$$
and it is shown that this cohomology group is tautological, so it will  consist of a direct sum of Galois representations $\Ll$. 

For any $n \leq 13$, $\mathbf \Phi'_{\leq 6+n}$ has rank $r_n \leq 17$.  Fix any partition $\mu$ of $n$. Since we know the coefficients of $1$ and $\Ll$, we have $r_n-2$ unknowns $c^{\mu}_{k,j}$. In Section~\ref{sec-relations} we found $15$ linear relations for the $r_n$ unknowns $c^{\mu}_{k,j}$, which turn out to be linearly independent. These therefore determine $e_{c,\mu}(\Mmb_{3,n} \otimes \overline \Q )$. Since we can compute $e_{c,\mu}(\partial \Mm_{3,n} \otimes \overline \Q )$ for any $n$, see Section~\ref{sec-boundary}, we can also determine $e_{c,\mu}({\Mm}_{3,n} \otimes \overline \Q )$ for $n \leq 13$.

Since $\mathbf \Phi'_{\leq 20}$ has rank $18$, we are missing one relation for $n=14$. But by Theorem~\ref{thm-A3eul} we know $e_c(\Aa_3,\V_{\lambda})$ for any $\lambda$ such that $|\lambda|=14$. Using the stratification \eqref{eq-stratA} together with the information about genus $1$ and $2$, we can compute $e_{c}(t_3(\Mm_{3}) \otimes \overline \Q,\Vla)$ for any $\lambda$ such that $|\lambda|=14$. Using Lemma~\ref{lem-transl} and the above, we can compute $e_{c}(\Mm_{3} \otimes \overline \Q,t_3^*\Vla)$ for any $\lambda$ such that $|\lambda| \leq 14$. Using Lemma~\ref{lem-transl} again, we can compute $e_{c,\mu}(\Mm_{3,n} \otimes \overline \Q )$ for all $n \leq 14$ and partitions $\mu$. By Section~\ref{sec-boundary} we can also compute $e_{c,\mu}(\partial \Mm_{3,n} \otimes \overline \Q )$ for all $n \leq 14$ and partitions $\mu$.
\end{proof}

\begin{rmk} The Euler characteristics $e_{c,\mu}(\Mm_{3,n} \otimes \overline \Q )$ and $e_{c,\mu}(\overline{\Mm}_{3,n} \otimes \overline \Q )$ are determined unconditionally for $n \leq 7$;  see \cite{JBquart} together with the results of Section~\ref{sec-boundary}.  
In \cite{CaLa} it is proven that $e_{c,\mu}(\overline{\Mm}_{3,n} \otimes \overline \Q )$ is a polynomial in $\Ll$ for $n$ up to 8.  So the result for $e_{c,\mu}(\Mm_{3,n} \otimes \overline \Q )$ and $e_{c,\mu}(\overline{\Mm}_{3,n} \otimes \overline \Q )$ determined above also holds unconditionally for $n=8$.
In turn, this means that
the Euler characteristics $e_{c}(\Aa_{3} \otimes \overline \Q, \Vla )$ are determined unconditionally 
for all $\lambda$ with $|\lambda| = 8$ (\textit{cf.}~Remark~\ref{rem6final}).
\end{rmk}

\begin{rmk} For $8 \leq n \leq 14$, the Euler characteristic $e_{c}(\Mm_{3,n} \otimes \overline \Q )$ is \emph{not} a polynomial in $\Ll$. The same holds for 
$e_{c}(\overline{\Mm}_{3,n} \otimes \overline \Q )$ for $10 \leq n \leq 14$.  Note that this result is unconditional, as can be seen using the interpolation in the proof of Theorem~\ref{thm-M3eul}.
\end{rmk}

\begin{exa} We give two examples assuming Conjecture~\ref{conj-gal}:
\begin{align*}
e_{c,[2^4,1^6]}\left(\Mm_{3,14} \otimes \overline \Q\right) = & S[6,8]+(\Ll^4+6\Ll^3-16\Ll^2-6\Ll+24) S[12]-3\Ll^8+ \\ &+12\Ll^7-60\Ll^6+308\Ll^5 -119\Ll^4-646\Ll^3+192\Ll ^2+281\Ll-55
\end{align*}
and 
\[
e_{c,[2^4,1^6]}\left(\Mmb_{3,14} \otimes \overline \Q\right)=
(-8\Ll^6-31\Ll^5-31\Ll^4-8\Ll^3)S[12].
\]
\end{exa}

\end{section}


\end{document}